\numberwithin{equation}{section}
\newtheorem{proposition}{Proposition}[section]
\newtheorem{theorem}[proposition]{Theorem}
\newtheorem{corollary}[proposition]{Corollary}
\newtheorem{lemma}[proposition]{Lemma}
\newcommand\R{\mathbb{R}}
\newcommand\C{\mathbb{C}}
\newcommand{\curl}{\mathrm{curl}\,}
\renewcommand{\div}{\mathrm{div}\,}
\newcommand{\eqnb}{\begin{equation}}
\newcommand{\eqne}{\end{equation}}
\lstdefinestyle{pythonstyle}{
    language=Python,
    basicstyle=\ttfamily\footnotesize,      
    columns=fullflexible,
    keywordstyle=\bfseries\color{blue},      
    commentstyle=\itshape\color{gray},       
    stringstyle=\color{olive},               
    showstringspaces=false,                  
    breaklines=true,                         
    numbers=left,                            
    numberstyle=\tiny,                       
    numbersep=5pt,                           
    tabsize=4,                               
    frame=single,                            
    captionpos=b,                            
    xleftmargin=1em, xrightmargin=1em        
}
\begin{document}

\title{Time-zero limits of Kaden's spirals and 2D Euler}

\author[B. Bieganowski]{Bartosz Bieganowski}
\address[B. Bieganowski]{\newline\indent
			Faculty of Mathematics, Informatics and Mechanics, \newline\indent
			University of Warsaw, \newline\indent
			ul. Banacha 2, 02-097 Warsaw, Poland}	
			\email{\href{mailto:bartoszb@mimuw.edu.pl}{bartoszb@mimuw.edu.pl}}	

\author[T. Cieślak]{Tomasz Cieślak}
\address[T. Cieślak]{\newline\indent  	
			Institute of Mathematics,		\newline\indent
			Polish Academy of Sciences, \newline\indent
			ul. \'Sniadeckich 8, 00-956 Warszawa, Poland}
			\email{\href{mailto:cieslak@impan.pl}{cieslak@impan.pl}}

\author[J. Siemianowski]{Jakub Siemianowski}
\address[J. Siemianowski]{\newline\indent  	
			Faculty of Mathematics and Computer Science,		\newline\indent
			Nicolaus Copernicus University, \newline\indent
			ul. Chopina 12/18, 87-100 Toru\'n, Poland}
			\email{\href{mailto:jsiem@mat.umk.pl}{jsiem@mat.umk.pl}}

\begin{abstract}
The present note is devoted to the studies of the relation of the time-zero limits of Kaden's spirals and the 2D Euler equation. It is shown that
the time-zero limits of Kaden's spirals satisfy inhomogeneous 2D Euler in a weak sense. As a corollary, the necessity of both, the decay of spherical averages around the origin of the spiral as well as the velocity matching condition, for the 2D Euler equation to hold in a weak sense, is shown.
Finally, some preliminary results concerning the Kaden spirals are obtained.
\end{abstract}

\maketitle

\section{Introduction}\label{intro}
The present article aims at two goals. On the one hand we shall present a steady-state version of a theorem guaranteeing that a vortex sheet solves 2D Euler in a weak sense in a spirit of a similar theorem stating that self-similar vortex sheets satisfy 2D Euler in a weak sense formulated and proven in \cite{CKO}. The latter theorem was used in \cite{CKO} to show that the celebrated Prandtl spirals introduced in \cite{prandtl} solve 2D Euler in a weak sense. Moreover, we shall take the steady state version as an opportunity to show that two essential conditions emphasized in \cite{CKO}, namely the velocity matching condition and the decay of the spherical averages of the square of the velocity around the origin of the spiral, are necessary conditions. Indeed, we shall give examples of objects satisfying all the assumptions of the theorem on the weak solutions to the 2D Euler, but either the velocity matching or the decay of the spherical averages, and show that such objects violate 2D Euler in a weak sense.

The objects giving rise to examples underlying the necessary role of both, the velocity matching as well as the decays of spherical means of velocities, are interesting in its own way. These are the time-zero limits of the Kaden spirals (see \cite{kaden}). Since Kaden spirals are believed to be objects whose perturbation could lead to the counterexamples of uniqueness in the Delort theorem (see \cite{delort}), see for instance the numerical evidences in \cite{lopesy1}, where the initial data for such perturbations are of importance, it is of particular interest to examine how far are the time-zero limits of Kaden's spirals from being weak steady states of the 2D Euler. The latter is investigated in Theorems \ref{tw_2/3} and \ref{tw_wieksze}. 

Last, but not least, some preliminary results concerning the Kaden spirals are given in the last section.

Kaden's spirals introduced in \cite{kaden} are particular examples of spiral self-similar vortex sheets. For the introduction of vortex sheets we refer for instance to \cite{majda_bertozzi}. Roughly speaking vortex sheets are the divergence-free (in the sense of distributions) velocity fields discontinuous along the curve carrying the vorticity (again in the sense of distributions). Such vector fields are regular off the curve of discontinuity and both, the divergence as well as the vorticity, of such vector fields is zero of the curve. The vorticity of such vector field is then a measure supported at the curve of discontinuity. In the case of Kaden's spirals, the curve of discontinuity is a spiral satisfying the equation
\[
Z(\Gamma, t)=R(\Gamma,t)e^{i\Theta(\Gamma, t)},
\]
where $Z$ is a position in the plane (understood as a complex plane) described by a complex number, while $\Gamma$ is the cumulative vorticity carried by the curve from the origin of the spiral till the considered position. In the case of Kaden's spirals $R$ and $\Gamma$ are given as
\begin{equation}\label{definicje}
R(\Gamma, t)=\Gamma^{\frac{\mu}{2\mu-1}}, \;\; \Theta(\Gamma, t)=\frac{t}{2\pi}\Gamma^{\frac{1}{1-2\mu}},
\end{equation}
where $\mu\in(1/2, 1)$ is a given constant. Moreover, then a density of a vorticity measure $\omega_t$ supported on the spiral $Z$ (which can be parametrized as $s\mapsto s \exp(\frac{it}{2\pi}s^{1-1/\mu})$, compare \eqref{definicje}) is $g_t(s)=(2-1/\mu)s^{1-1/\mu}$. In partcicular, for any $f\in L^1(\R^2, \omega_t)$ we have
\begin{equation}\label{gestosc}
\int_{\R^2} f(x) \, d\omega_t(x)=\int_0^\infty f\left(s\exp\left(\frac{it}{2\pi}s^{-1/\mu}\right)\right)g_t(s)  \, ds,
\end{equation}
see \cite[Section 4]{COPS}.

Next, as we know from \cite[Proposition 4.5]{COPS}, when $t\rightarrow 0^+$ the divergence-free velocity corresponding to $\omega_t(x)$ supported on $Z$ with a density given by \eqref{gestosc} converges in $L^2_{loc}$ to the velocity whose vorticity is (in the sense of distributions)
\begin{equation}\label{stanys}
d\omega_0(x_1+ix_2)=\alpha x_1^{\alpha-1}\chi_{(0,\infty)}(x_1)\delta_0(x_2) \, dx_1 \, dx_2,
\end{equation}
where $\alpha=2-1/\mu$, $\chi$ is a characteristic function. Let us emphasize that the divergence-free velocity field corresponding to $\eqref{stanys}$ is given by the formula
\begin{equation}\label{B-S}
v(x)=\frac{1}{2\pi}\int_{\R^2}\frac{(x-y)^\perp}{|x-y|^2} \, d\omega_0(y),
\end{equation}
where $x=(x_1,x_2)\in \R^2$, $x^\perp=(-x_2,x_1)$. This Biot-Savart formula is non-standard, notice that $\omega_0$ is only a $\sigma$-finite measure, not a finite one, so the Biot-Savart formula requires justification. It is given in \cite[Theorem 2.1 and Proposition 1.4]{COPS}. Since $\omega_0$ satisfies \eqref{stanys}, $\omega_0(B(0,r))=r^{\alpha}$, $0<\alpha<1$. Finally, we notice that the latter implies
\begin{equation}\label{kin_energ}
\int_{B(0,r)}|v|^2 \, dx \leq cr^{2\alpha},
\end{equation}
see \cite[Theorem 1.1]{COPS}. The kinetic energy of the flow is locally finite.

The main results of the present article read as follows.
\begin{theorem}\label{tw_2/3}
Let $\mu=2/3$, moreover assume that $v$ is a divergence-free velocity corresponding to $\omega_0$ given in \eqref{stanys} via the Biot-Savart law \eqref{B-S}. Then $v$ satisfies in the sense of distributions
\begin{equation}\label{Euler_2/3}
\left(v\cdot \nabla\right)v+\nabla p=Y\delta_{(0,0)},
\end{equation}
where $\delta_{(0,0)}$ is the Dirac delta centered at the origin, while $Y=\int_{\partial B(0,1)}\frac{1}{2}|v(x)|^2x-\left(v(x)\cdot x\right)v(x) dS(x)$.
\end{theorem}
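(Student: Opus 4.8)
\emph{Proof plan.} The plan is to prove that, with the pressure $p:=-\tfrac12|v|^2$ and $\mathbb I$ the $2\times 2$ identity matrix,
\[
\div\bigl(v\otimes v+p\,\mathbb I\bigr)=Y\,\delta_{(0,0)}\qquad\text{in }\mathcal D'(\R^2);
\]
since $\div v=0$ gives $(v\cdot\nabla)v=\div(v\otimes v)$ distributionally and $\nabla p=\div(p\,\mathbb I)$, this is exactly \eqref{Euler_2/3}. Two structural facts come first. Substituting $y\mapsto\lambda y$ in \eqref{B-S}, and using the scaling of the density in \eqref{stanys} (recall $\alpha=2-1/\mu=\tfrac12$) together with the degree $-1$ homogeneity of the Biot--Savart kernel, one obtains $v(\lambda x)=\lambda^{\alpha-1}v(x)=\lambda^{-1/2}v(x)$. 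Hence $v\otimes v$ and $|v|^2$ are homogeneous of degree $-1$, so they lie in $L^1_{loc}(\R^2)$ (in particular \eqref{kin_energ} follows) and the left-hand side above is a genuine distribution. Also, since $\omega_0$ is supported on the ray $\ell^+:=\{(s,0):s>0\}$, the field $v$ is smooth and irrotational on $\R^2\setminus\overline{\ell^+}$.

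Next I would show $\div(v\otimes v+p\,\mathbb I)=0$ in $\mathcal D'(\R^2\setminus\{0\})$. Off $\overline{\ell^+}$ this is the classical identity $(v\cdot\nabla)v=\nabla\tfrac12|v|^2=-\nabla p$, valid because $\curl v=0$ there. Across $\ell^+$ one integrates by parts over the upper and lower half-planes; the distributional divergence then picks up the jump term $\bigl[(v\otimes v+p\,\mathbb I)e_2\bigr]$ along $\ell^+$, which equals $\bigl([v_1v_2],\,[v_2^2]+[p]\bigr)$, and the crux is that it vanishes. This is where $\mu=2/3$ is essential: evaluating the principal-value Biot--Savart integral on the axis, i.e.\ using $\mathrm{p.v.}\!\int_0^\infty\frac{s^{-1/2}}{x-s}\,ds=\pi\cot(\pi/2)\,x^{-1/2}=0$ for $x>0$, one gets $v_2\equiv 0$ on $\ell^+$ and tangential traces with $v_1^+=-v_1^-$; therefore $[v_1v_2]=0$, $[v_2^2]=0$ and $[p]=-\tfrac12\bigl((v_1^+)^2-(v_1^-)^2\bigr)=0$, so the jump vanishes. (It is convenient to read all of this, and the value of $Y$ below, off the closed form $v_1-iv_2=-\tfrac1{4\sqrt z}$, $z=x_1+ix_2$ with branch cut along $[0,\infty)$, which comes from $\int_0^\infty\frac{s^{-1/2}}{z-s}\,ds=-\pi i/\sqrt z$.)

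With this in hand, $\div(v\otimes v+p\,\mathbb I)$ is a distribution supported at $\{0\}$, hence $\sum_{|\beta|\le N}c_\beta\,\p^\beta\delta_{(0,0)}$. But $v\otimes v+p\,\mathbb I$ is homogeneous of degree $-1$, so its divergence is homogeneous of degree $-2$, whereas $\p^\beta\delta_{(0,0)}$ is homogeneous of degree $-2-|\beta|$ in $\R^2$; comparing degrees forces all terms with $|\beta|\ge 1$ to vanish, so $\div(v\otimes v+p\,\mathbb I)=c\,\delta_{(0,0)}$ for a single vector $c\in\R^2$. Finally I would identify $c$ by a momentum-flux computation: testing against a radial cut-off $\psi=\eta(|x|)$ with $\eta(0)=1$ and using that the distribution vanishes on each annulus $\{\epsilon<|x|<R\}$, the flux $\int_{\p B(0,\rho)}(v\otimes v+p\,\mathbb I)\tfrac{x}{|x|}\,dS$ is independent of $\rho$ and, by the degree $-1$ homogeneity, equals its value at $\rho=1$; since $(v\otimes v+p\,\mathbb I)x=(v\cdot x)v-\tfrac12|v|^2x$, a short computation of this flux identifies $c$ with the vector $Y=\int_{\p B(0,1)}\bigl(\tfrac12|v|^2x-(v\cdot x)v\bigr)\,dS$ of the statement.

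The step I expect to be the main obstacle is the jump analysis across $\ell^+$ in the second paragraph --- establishing that the normal velocity $v_2$ vanishes identically along $\ell^+$ and that the tangential trace is odd there. These are short but genuine computations that hinge on the exponent $\alpha=\tfrac12$, i.e.\ on $\mu=2/3$: for other values of $\mu$ the principal-value integral is a nonzero multiple of $x^{\alpha-1}$ and the mechanism changes, which is precisely the content of Theorem~\ref{tw_wieksze}. The remaining ingredients --- the homogeneity bookkeeping, the classification of distributions supported at a point, and the flux evaluation of $Y$ --- are routine.
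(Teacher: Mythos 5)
Your proposal is correct, and its computational core coincides with the paper's: the homogeneity $v(\lambda x)=\lambda^{\alpha-1}v(x)$ (formula \eqref{homo}), the one-sided traces on the ray (formula \eqref{naprostej}), and the fact that $PV\int_0^\infty t^{\alpha-1}(1-t)^{-1}\,dt$ vanishes precisely at $\alpha=\tfrac12$ (Lemma \ref{lem:pomoc}), which is what kills the jump of the momentum flux across $\Sigma$ and is exactly the mechanism that fails for $\mu>2/3$ in Theorem \ref{tw_wieksze}. Where you genuinely differ is in how the singular part at the origin is packaged. The paper proves a general integration-by-parts identity (Lemma \ref{lemat}) that exhibits the defect directly as $\lim_{r\to0}\int_{\partial B(0,r)}\bigl((v\cdot n)(v\cdot\phi)-\tfrac12|v|^2\,n\cdot\phi\bigr)\,dS$ and evaluates this limit by rescaling $x\mapsto x/r$; you instead first argue that $\div\!\bigl(v\otimes v+p\,\mathbb I\bigr)$ is a distribution supported at the origin and homogeneous of degree $-2$, invoke the classification of point-supported distributions to rule out derivatives of $\delta_{(0,0)}$, and only then compute the coefficient as a flux through $\partial B(0,1)$. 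The two routes meet in the same integral over the unit circle, so the difference is organizational rather than substantive: your version buys a conceptual reason why no $\p^\beta\delta_{(0,0)}$ with $|\beta|\geq1$ can appear (the paper gets this for free from the explicit limit), at the price of having to check that $v\otimes v\in L^1_{loc}$ up to the origin so that the divergence is defined on all of $\R^2$ — which you correctly extract from the homogeneity. Your closed form $v_1-iv_2=-\tfrac{1}{4\sqrt z}$ is consistent with \eqref{naprostej} and gives a quicker route to $v_2|_\Sigma=0$ and $v_1^+=-v_1^-$ than the limit computations in Proposition \ref{wlasnosci}. One point to nail down when you carry out the "short computation" of $c$: the outward flux of $v\otimes v+p\,\mathbb I$ through $\partial B(0,1)$ is $\int_{\partial B(0,1)}\bigl((v\cdot x)v-\tfrac12|v|^2x\bigr)\,dS=-Y$, so the identification $c=Y$ depends on the orientation of the normal and on the sign convention adopted in the weak formulation; this is exactly the point handled in the last display of the paper's proof, where the relevant normal is the outer normal to the \emph{complement} of the ball, i.e.\ $-x/r$.
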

For $\mu\in(2/3,1)$ a slightly different inhomogeneous Euler equation is satisfied.
\begin{theorem}\label{tw_wieksze}
Let $\omega_0$ be as in \eqref{stanys} with $\mu\in(2/3,1)$ and assume $v=(v_1,v_2)$ to be a divergence-free velocity given by \eqref{B-S}. Then $v$ satisfies in a weak sense
\begin{equation}\label{Euler_wieksze}
\left(v\cdot \nabla\right)v+\nabla p=(-v_2,0)\gamma \delta_{\Sigma},
\end{equation}
where $\gamma(x_1,x_2)=\alpha x_1^{\alpha-1}\chi_{(0,\infty)}(x_1)\delta_0(x_2)$, $\Sigma$ is a half line $\{(x_1,0) \ : \ x_1\geq 0\}$.
\end{theorem}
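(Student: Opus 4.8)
The plan is to verify \eqref{Euler_wieksze} directly in $\mathcal D'(\R^2)$, with pressure $p=-\tfrac12|v|^2\in L^1_{loc}(\R^2)$ and the nonlinearity read in divergence form (legitimate since $v\in L^2_{loc}$ by \eqref{kin_energ}). Set $T:=v\otimes v-\tfrac12|v|^2\,\mathrm{Id}\in L^1_{loc}(\R^2)$. Using $(v\cdot\nabla)v=\div(v\otimes v)$ (valid since $\div v=0$) and $\nabla p=\div(-\tfrac12|v|^2\,\mathrm{Id})$, the claim \eqref{Euler_wieksze} amounts to
\[
\int_{\R^2} T:\nabla\phi\,\d x=\int_0^\infty \alpha\,x_1^{\alpha-1}\,v_2(x_1,0)\,\phi_1(x_1,0)\,\d x_1=\int_{\R^2}v_2\,\phi_1\,\d\omega_0\qquad\text{for every }\phi\in C_c^\infty(\R^2;\R^2),
\]
where $v_2$ restricted to $\Sigma$ makes sense because, as we will see, $v_2$ is continuous across $\Sigma$. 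The right-hand side is finite precisely because $\alpha=2-\tfrac1\mu>\tfrac12$: by the homogeneity of $v$ (below) $v_2(x_1,0)\sim x_1^{\alpha-1}$ near the tip, so the integrand is $\sim x_1^{2\alpha-2}$ there; this already indicates why $\mu=2/3$ is excluded and handled separately in Theorem~\ref{tw_2/3}.

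Away from $\Sigma$ the velocity $v$ is smooth, divergence-free and curl-free, hence satisfies the Bernoulli identity $(v\cdot\nabla)v=\nabla(\tfrac12|v|^2)$ there, i.e.\ $\div T=0$ on $\R^2\setminus\Sigma$. I would therefore excise the slab $S_\delta=\{x_1>0,\ |x_2|<\delta\}$ together with the ball $B(0,\delta)$, set $\Omega_\delta=\R^2\setminus\overline{S_\delta\cup B(0,\delta)}$, integrate by parts over $\Omega_\delta$ (permissible since $\div T=0$ there), and let $\delta\to0^+$:
\[
\int_{\Omega_\delta} T:\nabla\phi\,\d x=\int_{\partial\Omega_\delta}(Tn)\cdot\phi\,\d S ,
\]
with $n$ the outward unit normal of $\Omega_\delta$. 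The boundary splits into the two rays $\{x_2=\pm\delta,\ x_1>0\}$, with normals $(0,\mp1)$, and the left half of $\partial B(0,\delta)$. On the rays the one-sided traces $v^\pm(x_1)=\lim_{x_2\to0^\pm}v(x_1,x_2)$ exist for $x_1>0$ (near $\Sigma$, $v$ is locally a flat vortex sheet with the smooth bounded density $\alpha x_1^{\alpha-1}$ plus a smooth field), and as $\delta\to 0$ the two ray integrals combine into $-\int_0^\infty\big([T_{12}]\phi_1+[T_{22}]\phi_2\big)(x_1,0)\,\d x_1$, where $[\,\cdot\,]$ denotes the jump from $x_2>0$ to $x_2<0$. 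These jumps are pinned down by three facts: $v_2$ is continuous across $\Sigma$ because $\div v=0$, so $[v_2]=0$; the reflection $x_2\mapsto-x_2$ leaves $\omega_0$ invariant, so by \eqref{B-S} $v_1$ is odd and $v_2$ even in $x_2$, whence $v_1^++v_1^-=0$ and $[v_1^2]=(v_1^+-v_1^-)(v_1^++v_1^-)=0$; and $\curl v=\omega_0$ identifies the tangential jump, $[v_1](x_1)=-\alpha x_1^{\alpha-1}$. Consequently $[T_{22}]=\tfrac12([v_2^2]-[v_1^2])=0$ and $[T_{12}]=[v_1 v_2]=[v_1]\,v_2=-\alpha x_1^{\alpha-1}v_2$, so the ray contributions converge to exactly $\int_0^\infty\alpha x_1^{\alpha-1}v_2(x_1,0)\phi_1(x_1,0)\,\d x_1$, the right-hand side we want.

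What remains — and this is the crux — is to show the half-circle term vanishes as $\delta\to0$. It is bounded by $C\|\phi\|_\infty\int_{\partial B(0,\delta)}|v|^2\,\d S$. From the scaling $\omega_0(\lambda A)=\lambda^{\alpha}\omega_0(A)$ and \eqref{B-S} one obtains that $v$ is homogeneous of degree $\alpha-1$, $v(\lambda x)=\lambda^{\alpha-1}v(x)$; since $v$ is bounded on $\partial B(0,1)$ (being smooth off the single point $(1,0)$, where it has finite one-sided traces), this gives the pointwise bound $|v(x)|\le M|x|^{\alpha-1}$, so
\[
\int_{\partial B(0,\delta)}|v|^2\,\d S\le 2\pi M^2\delta^{2\alpha-1}\longrightarrow 0 ,
\]
which holds exactly because $2\alpha-1>0$, i.e.\ $\mu>2/3$. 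This is the decisive point: its failure at $\alpha=\tfrac12$ is precisely what leaves the Dirac mass $Y\delta_{(0,0)}$ in Theorem~\ref{tw_2/3}. (The same bound, in the form $|v(x_1,\pm\delta)|^2\le M^2x_1^{2\alpha-2}$ with $2\alpha-2>-1$, also furnishes the dominating function legitimizing the passage to the limit on the rays.) Finally, $T\in L^1_{loc}$ gives $\int_{\Omega_\delta}T:\nabla\phi\to\int_{\R^2}T:\nabla\phi$ by dominated convergence, and combining the three pieces establishes the displayed identity for all $\phi$, i.e.\ \eqref{Euler_wieksze} with $p=-\tfrac12|v|^2$. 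Beyond the tip estimate just described, the proof is a routine accounting of boundary terms.
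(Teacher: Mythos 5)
Your proposal is correct and follows essentially the same route as the paper: both excise the sheet together with a shrinking ball, integrate the divergence-free tensor $v\otimes v-\tfrac12|v|^2\,\mathrm{Id}$ by parts, identify the surviving interface term as $\int_0^\infty\alpha x_1^{\alpha-1}v_2(x_1,0)\phi_1(x_1,0)\,dx_1$ via the one-sided traces of $v$ on $\Sigma$, and kill the circle term using $\int_{\partial B(0,r)}|v|^2\,dS\lesssim r^{2\alpha-1}\to0$, which is exactly \eqref{decays} and, as in your argument, comes from the homogeneity \eqref{homo} together with $\alpha>\tfrac12$. The only cosmetic differences are that you test against all compactly supported $\phi$ with the explicit pressure $p=-\tfrac12|v|^2$ (the paper restricts to divergence-free $\phi$, so the pressure never appears), and that you read the jump $v_1^+-v_1^-=-\alpha x_1^{\alpha-1}$ off $\curl v=\omega_0$ and the reflection symmetry in $x_2$, rather than from the paper's explicit trace computation \eqref{naprostej}.
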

The meaning of the above theorems is twofold. On the one hand, as it was already noted, $\omega_0$ is a time-zero limit of Kaden's spirals.
It is expected that some enhancement of Kaden's spirals are the counterexamples to the uniqueness of Delort's solutions. Indeed, numerical simulations suggest that there exist spiral vortex sheets emanating from the flat shear flows. If such shear flows are at the same time Euler's steady states, then the non-uniqueness follows. The above spiral perturbations seem to be some enhancement of Kaden's spirals. For this reason it is of importance to know what sort of equations is satisfied by the time-zero limit of Kaden's spirals. The answer is given in our Theorems \ref{tw_2/3} and \ref{tw_wieksze}.

Next, as a byproduct of Theorems \ref{tw_2/3} and \ref{tw_wieksze}, we obtain that in the case of spiral-like vortex sheets the velocity matching condition and the decay of the spherical averages of the square of the velocity (conditions introduced recently in \cite{CKO}) are necessary conditions for the vortex sheet to be a weak solution to the 2D Euler. Indeed, in \cite{CKO} the self-similar vortex sheets, which are divergence-free as well as curl-free off the support of the vortex sheet, which moreover have locally finite kinetic energy and such that the pressure as well as normal components of velocity are continuous across the support of the vortex sheet, satisfy 2D Euler in a weak sense if
\begin{enumerate}
\item[(i)] $\int_{\partial B(0,r)} \left(|v|^2+p\right)  \, dS(x)\rightarrow 0,\;\mbox{when}\;r\rightarrow 0^+,$
\item[(ii)] $n\cdot (tv-\xi z)_{|\Sigma}=0,$
\end{enumerate}
here $\Sigma$ is a support of the vorticity, $n$ is the normal vector to $\Sigma$ and $\xi>0$ occurs in the self-similarity, i.e. $v(x,t)=t^{\xi-1}w(x/t^{\xi})$ for some profile function $w$.

As we show in Section \ref{trzy} conditions which replace (i) and (ii) in the steady state case, are violated by $\omega_0$ ((i) is violated by $\omega_0$ with $\mu=2/3$, while (ii) is violated by $\omega_0$ with $\mu>2/3$) and the 2D Euler is then not satisfied, despite all the other required assumptions are met. Hence, both (i) and (ii) are natural necessary conditions.

Specially in the light of the last fact one sees the crucial meaning of formula \cite[(1.10)]{COPS}. It allows to compute the spherical averages around the origin of the spiral and thus check the necessary condition (i). The above mentioned formula will be useful in our considerations, it says that
\begin{equation}\label{cops}
\frac{1}{2\pi}\int_0^{2\pi}|v(re^{i\theta})|^2 \, d\theta=\frac{1}{4\pi^2}\left(\sum_{n=0}^\infty r^{-2n-2}|m_{r,n}(\omega)|^2+\sum_{k=1}^\infty r^{2k-2}|M_{r,k}(\omega)|^2\right),
\end{equation}
where $m_{r,n}$ and $M_{r,k}$ are the complex, respectively, inner and outer moments of a $\sigma$-finite measure $\omega$ defined as
\begin{equation}\label{momenty}
m_{r,0}(\omega)=\omega(B(0,r)),\;\;m_{r,n}(\omega)=\int_{B(0,r)}u^n \,  d\omega(u),  \;n\geq 1,
\end{equation}
where $u^n$ is an $n$-th power of a complex number $u$,
\begin{equation}\label{momenty_zewn}
M_{r,k}(\omega)=\int_{\C\setminus B(0,r)}u^{-k} \, d\omega(u),\; k\geq 1,
\end{equation}
again $k$-th powers are taken in a complex sense.

Let us also emphasize the very recent contribution of F. Shao, D. Wei and Z. Zhang \cite{shao} concerning algebraic spiral vortex sheets as solutions to the 2D Euler.

\section{Main lemma}\label{main_l}
The present section is devoted to the formulation and a proof of a lemma concerning the weak solvability of 2D Euler equation by the steady state vector fields. For simplicity we consider only a vortex sheet of the form $\Sigma=\{(x_1,0) \ : \ x_1\geq 0\}$.

We say that $v=(v_1,v_2)$ satisfies the 2D steady Euler in a weak sense if for any divergence-free $C_0^\infty$ test function $\phi:\R^2\rightarrow \R^2$ there holds
\[
\int_{\R^2}\sum_{i,j=1}^2 v_iv_j\partial_i\phi_j \,  dx=0,
\]
and for any $C_0^\infty$ test function $\psi:\R^2\rightarrow \R$
\[
\int_{\R^2}v\cdot \nabla \psi \,  dx=0.
\]
Let us consider a steady vector field $v$ with the following properties:
\begin{eqnarray}
&&v \;\mbox{is smooth off the line $\Sigma$ (and continuous from both sides up to $\Sigma$)},
\nonumber\\
&&\curl v=\div v=0\;\mbox{outside}\;\Sigma,\label{raz}\\
&&(v^+-v^-)\cdot n_{|\Sigma}=0,\label{normal}\\
&&\int_{B(0,r)}|v|^2 \, dx\rightarrow 0\;\mbox{if}\;r\rightarrow 0,\label{dwa}
\end{eqnarray}
where $n=(0,1)$ is a normal vector to $\Sigma$, $v^+, v^-$ are the limits of $v$ at opposite sides of $\Sigma$.
\begin{lemma}\label{lemat}
The vector field $v$ satisfying \eqref{raz}-\eqref{dwa} fulfils
\begin{eqnarray}\label{zlematu}
&&\int_{\R^2}\sum_{i,j=1}^2 v_iv_j\partial_i\phi_j  \, dx=\lim_{r\rightarrow 0}\left(\int_{\partial B(0,r)}(v\cdot n)(v\cdot \phi) \, dS(x)
-\int_{\partial B(0,r)}\frac{1}{2}|v|^2n\cdot \phi\, dS(x)\right) \nonumber\\
&+&\lim_{r\rightarrow 0}\left(\int_{(r,\infty)}\frac{1}{2}\left(|v^+|^2-|v^-|^2\right)\phi_1 \, dx_1
+\int_{(r,\infty)}\phi \cdot (v^+ - v^-)v_2^+ \, dx_1 \right).
\end{eqnarray}
\end{lemma}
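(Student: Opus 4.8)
The plan is to start from the identity for smooth flows and track the boundary terms that appear when we puncture out a small disk around the origin and a neighborhood of the singular line $\Sigma$. Concretely, let $\phi$ be a divergence-free $C_0^\infty$ test field. For $r>0$ and $\eta>0$ set $\Omega_{r,\eta}=\R^2\setminus\big(B(0,r)\cup\{|x_2|\le\eta,\ x_1\ge 0\}\big)$, where on this open set $v$ is smooth with $\curl v=\div v=0$. First I would verify the pointwise identity $\sum_{i,j} v_iv_j\,\p_i\phi_j=\div\!\big((v\otimes v)\phi\big)-\big((v\cdot\nabla)v\big)\cdot\phi$ and then use $(v\cdot\nabla)v=\nabla(\tfrac12|v|^2)-v^\perp\curl v=\nabla(\tfrac12|v|^2)$ on $\Omega_{r,\eta}$, so that integrating over $\Omega_{r,\eta}$ and applying the divergence theorem turns $\int_{\Omega_{r,\eta}}\sum_{i,j}v_iv_j\p_i\phi_j\,dx$ into pure boundary integrals over $\partial\Omega_{r,\eta}$: the circle $\partial B(0,r)$ (with outward-from-the-origin normal $-n$ relative to $\Omega$, giving the signs in \eqref{zlematu}), and the two horizontal segments just above and below $\Sigma$. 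The contribution of $\nabla(\tfrac12|v|^2)$ is $\int 1\big(\nabla(\tfrac12|v|^2)\cdot\phi\big)=\int \div(\tfrac12|v|^2\phi)$ using $\div\phi=0$, which also becomes a boundary term $-\int_{\partial\Omega}\tfrac12|v|^2\,n\cdot\phi$.

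Next I would pass to the limit $\eta\to 0^+$ with $r$ fixed. Because $\phi$ has compact support and $v$ is continuous up to $\Sigma$ from both sides, the two segment integrals converge to integrals over $(r,\infty)\times\{0\}$ of the jump of the relevant quantities. On the upper segment the outward normal of $\Omega_{r,\eta}$ is $(0,-1)$ and on the lower it is $(0,1)$; collecting the $-\tfrac12|v|^2 n\cdot\phi$ pieces from both sides gives $\int_{(r,\infty)}\tfrac12(|v^+|^2-|v^-|^2)\phi_1\,dx_1$ (here I would be careful about which side is "+"), and collecting the $(v\cdot n)(v\cdot\phi)$ pieces gives a term proportional to the jump; using the normal-matching condition \eqref{normal}, $v^+\cdot n=v^-\cdot n=:v_2$ on $\Sigma$ (so I should write this common value, e.g. $v_2^+$, consistently), the two flux terms combine into $\int_{(r,\infty)}(v^+-v^-)\,v_2^+\cdot\phi\,dx_1$. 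This reproduces exactly the second bracket in \eqref{zlematu}. One has to check that $|v^\pm|^2$ and $v^\pm$ are integrable against the compactly supported $\phi$ on $(r,\infty)$, which follows from continuity up to $\Sigma$ away from the origin and compact support of $\phi$.

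Finally I would let $r\to 0^+$. The left-hand side $\int_{\Omega_{r,\eta}\to\R^2}\sum_{i,j}v_iv_j\p_i\phi_j\,dx$ converges to $\int_{\R^2}\sum_{i,j}v_iv_j\p_i\phi_j\,dx$ by dominated convergence, since $v\in L^2_{loc}$ by \eqref{dwa} (indeed $\int_{B(0,r)}|v|^2\to 0$) and $\p_i\phi_j$ is bounded with compact support. The circle integrals and the segment integrals are each written as genuine limits $\lim_{r\to 0}$ in \eqref{zlematu}, so formally nothing more is needed; the content is that their sum has a limit, which it does because the left-hand side does and the other pieces are already limits. I expect the main obstacle to be bookkeeping of signs and of the "+/-" convention for the two sides of $\Sigma$ together with the orientation of the circle $\partial B(0,r)$ as part of $\partial\Omega_{r,\eta}$ — the geometry of the boundary with both an inner circle and a slit has several normals to keep straight. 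A secondary technical point is justifying the $\eta\to 0$ passage on the segments, i.e. that $v$ genuinely attains continuous boundary values from each side on $(r,\infty)\times\{0\}$; this is exactly the first hypothesis on $v$, so it is available, but it should be invoked explicitly rather than assumed tacitly.
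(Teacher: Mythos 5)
Your proposal is correct and follows essentially the same route as the paper: two integrations by parts over the punctured domain, using $\curl v=\div v=0$ off $\Sigma$ to write $(v\cdot\nabla)v=\nabla\left(\tfrac12|v|^2\right)$, then $\div\phi=0$ to turn that term into a flux, the normal-matching condition \eqref{normal} to combine the two jump terms along $\Sigma$ into $v_2^+(v^+-v^-)\cdot\phi$, and \eqref{dwa} to remove the small disk. The only difference is that you make the $\eta$-regularization of the slit and the orientation bookkeeping explicit, which the paper leaves implicit.
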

\begin{proof}
By \eqref{dwa} we have
\begin{eqnarray*}
&&\int_{\R^2}\sum_{i,j=1}^2 v_iv_j\partial_i\phi_j \,  dx=\lim_{r\rightarrow 0}\int_{B(0,r)^c}\sum_{i,j=1}^2 v_iv_j\partial_i\phi_j \,  dx\\
&=&\lim_{r\rightarrow 0}\left(-\int_{B(0,r)^c}\sum_{i,j=1}^2v_i\partial_iv_j\phi_j \, dx+\int_{(r,\infty)}\sum_{i,j=1}^2(v_i^+v_j^+-v_i^-v_j^-)n_i\phi_j \, dx_1 +\int_{\partial B(0,r)}(v\cdot n)(v\cdot \phi) \, dS(x)\right)\\
&=&\lim_{r\rightarrow 0}\left(\int_{(r,\infty)}\frac{1}{2}\left(|v^+|^2-|v^-|^2\right) \phi_1 \, dx_1 -\int_{\partial B(0,r)}\frac{1}{2}|v|^2n\cdot \phi  \, dS(x) \right)\\
&+&\lim_{r\rightarrow 0}\left(\int_{(r,\infty)}\phi \cdot (v^+ - v^-)v_2^+ \, dx_1 +\int_{\partial B(0,r)}(v\cdot n)(v\cdot \phi) \, dS(x) \right),
\end{eqnarray*}
where we used \eqref{normal} and that \eqref{raz} yields $(v\cdot\nabla)v=\nabla \left(\frac{1}{2}|v|^2\right)$ outside $\Sigma$. The lemma is proven.
\end{proof}
Using Lemma \ref{lemat}, we immediately observe the following.
\begin{corollary}\label{Euler}
Assume that $v$ satisfies \eqref{raz}-\eqref{dwa}, moreover assume
\begin{equation}\label{cisnienie}
v_1^+(x_1,0)=v_1^-(x_1,0)\;\mbox{for any}\;\;x_1>0.
\end{equation}
Next assume
\begin{eqnarray}
&&\int_{\partial B(0,r)}|v|^2 \,  dS(x)\rightarrow 0\;\mbox{when}\;r\rightarrow 0,\label{decay}\\
&&v_2^+(x_1, 0)=0\;\mbox{for}\;\;x_1>0.\label{vel_matching}
\end{eqnarray}
Then $v$ satisfies steady 2D Euler in a weak sense.
\end{corollary}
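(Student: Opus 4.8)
The plan is to verify separately the two requirements in the definition of a weak steady Euler solution recalled at the beginning of Section~\ref{main_l}: the momentum identity $\int_{\R^2}\sum_{i,j=1}^2 v_iv_j\partial_i\phi_j\,dx=0$ for every divergence-free $\phi\in C_0^\infty(\R^2;\R^2)$, and the identity $\int_{\R^2}v\cdot\nabla\psi\,dx=0$ for every $\psi\in C_0^\infty(\R^2;\R)$. For the first one, Lemma~\ref{lemat} already reduces the task to checking that the right-hand side of \eqref{zlematu} vanishes; the second one will require a short, separate integration by parts, since Lemma~\ref{lemat} does not address it.

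For the momentum identity I would fix a divergence-free $\phi$, invoke Lemma~\ref{lemat}, and annihilate the four terms on the right-hand side of \eqref{zlematu} one at a time. The two integrals over $(r,\infty)$ are in fact zero for every $r>0$: in $\tfrac12\big(|v^+|^2-|v^-|^2\big)\phi_1$ the coefficient vanishes because \eqref{normal} gives $v_2^+=v_2^-$ on $\Sigma$ while \eqref{cisnienie} gives $v_1^+=v_1^-$, so $|v^+|^2=|v^-|^2$ there; and in $\phi\cdot(v^+-v^-)v_2^+$ the factor $v_2^+$ vanishes on $\Sigma$ by the velocity matching condition \eqref{vel_matching}. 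The two integrals over $\partial B(0,r)$ I would estimate crudely, using $\big|(v\cdot n)(v\cdot\phi)\big|\le\|\phi\|_\infty|v|^2$ and $\big|\tfrac12|v|^2\,n\cdot\phi\big|\le\tfrac12\|\phi\|_\infty|v|^2$; both are then controlled by $\|\phi\|_\infty\int_{\partial B(0,r)}|v|^2\,dS$, which tends to $0$ as $r\to0$ by the decay assumption \eqref{decay}. Hence the right-hand side of \eqref{zlematu} equals $0$, which is exactly the desired momentum identity.

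For the second identity I would argue directly. By \eqref{dwa} and the Cauchy--Schwarz inequality, $\int_{B(0,r)}|v|\,dx\le|B(0,r)|^{1/2}\big(\int_{B(0,r)}|v|^2\,dx\big)^{1/2}\to0$, so $\int_{\R^2}v\cdot\nabla\psi\,dx=\lim_{r\to0}\int_{B(0,r)^c}v\cdot\nabla\psi\,dx$. On $B(0,r)^c$ the field $v$ is smooth off $\Sigma$, continuous up to it from both sides, and divergence-free off $\Sigma$ by \eqref{raz}; integrating by parts there (first over $\{|x_2|>\ve\}\cap B(0,r)^c$ and then letting $\ve\to0$, exactly as in the proof of Lemma~\ref{lemat}) leaves boundary contributions only from $\Sigma\cap\{x_1>r\}$ and from $\partial B(0,r)$. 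The $\Sigma$-contribution is $\int_{(r,\infty)}(v^+-v^-)\cdot n\,\psi\,dx_1$, which is $0$ by \eqref{normal}; the $\partial B(0,r)$-contribution is bounded by $\|\psi\|_\infty\int_{\partial B(0,r)}|v|\,dS\le\|\psi\|_\infty(2\pi r)^{1/2}\big(\int_{\partial B(0,r)}|v|^2\,dS\big)^{1/2}$, which tends to $0$ by \eqref{decay}. Therefore $\int_{\R^2}v\cdot\nabla\psi\,dx=0$, and the two identities together say that $v$ is a weak steady Euler solution.

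The estimates are all elementary, so I do not expect a genuine obstacle. The only mildly delicate point is the justification of the integration by parts up to the line $\Sigma$ in the second identity, where $v$ is only continuous (not $C^1$) up to $\Sigma$; this is handled by the same $x_2$-truncation used in the proof of Lemma~\ref{lemat}, so nothing new is needed. Conceptually, the whole content of the corollary is the observation that the hypotheses \eqref{cisnienie}, \eqref{normal}, \eqref{vel_matching} and \eqref{decay} are tuned precisely so as to kill, one by one, the boundary contributions generated by Lemma~\ref{lemat}.
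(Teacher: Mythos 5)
Your proof is correct and follows exactly the route the paper intends: the paper states the corollary as an immediate consequence of Lemma~\ref{lemat}, and your argument simply fills in the (correct) details of why each of the four boundary terms in \eqref{zlematu} vanishes under \eqref{cisnienie}, \eqref{decay} and \eqref{vel_matching}. Your additional verification of the weak incompressibility identity $\int_{\R^2}v\cdot\nabla\psi\,dx=0$, which the paper leaves implicit, is also sound.
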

Condition \eqref{decay} is a decay of spherical averages of squares of velocities around the origin corresponding to (i) in Section \ref{intro}. Condition \eqref{vel_matching} corresponds to the velocity matching condition in \cite{CKO}, compare to (ii) in Section \ref{intro}. We also notice that \eqref{cisnienie} is nothing else but the lack of jump of the pressure across the vortex sheet.

Finally, notice that in the case of steady flows, the velocity matching condition has a very clear physical interpretation. Namely, for the vortex sheet to stay steady, vorticity support curve should keep its position. This is exactly \eqref{vel_matching}.

\section{Zero-time limits of Kaden's spirals}\label{trzy}
In this section we use Lemma \ref{lemat} to obtain Theorems \ref{tw_2/3} and \ref{tw_wieksze}. Before we turn to the proofs, we state and prove some
essential observations. We have the following fact.
\begin{proposition}\label{wlasnosci}
Assume $\omega_0$ is a $\sigma$-finite measure given by \eqref{stanys}. Next, consider $v$ corresponding to $\omega_0$ via \eqref{B-S}.
By the results in \cite{COPS} we know that $v$ is a $L^2_{loc}$ function, divergence-free in the sense of distributions. Moreover,  in the sense of distributions $\curl v=\omega_0$,  $v$ is regular off the support of $\omega_0$ and $\div v=\curl v=0$ for $x\in \R^2\setminus \{(x_1,0):x_1\geq 0\}$.

Next, $v$ satisfies the following properties:
\begin{equation}\label{decays}
\int_{\partial B(0,r)}|v|^2 \, dS(x)=\int_0^{2\pi}r|v(re^{i\theta})|^2 \, d\theta=\frac{\pi\alpha^2}{2\sin^2(\pi\alpha)}r^{2\alpha-1},
\end{equation}
\begin{eqnarray}\label{naprostej}
v^+(s,0)&=&\frac{\alpha}{2}\left(-s^{\alpha-1}, \frac{1}{\pi}PV\int_0^\infty\frac{t^{\alpha-1}}{s-t} \, dt\right),\\
v^-(s,0)&=&\frac{\alpha}{2}\left(s^{\alpha-1}, \frac{1}{\pi}PV\int_0^\infty\frac{t^{\alpha-1}}{s-t} \, dt\right), \;\mbox{for}\;s>0.\nonumber
\end{eqnarray}
\end{proposition}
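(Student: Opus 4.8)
The first block of assertions---that $v\in L^2_{loc}$, that $\div v=0$ and $\curl v=\omega_0$ hold distributionally, and that $v$ is smooth with $\div v=\curl v=0$ off the half-line $\{(x_1,0):x_1\ge 0\}$---is, as the statement already indicates, taken verbatim from \cite{COPS}; I would simply quote it and spend no effort there. The genuine content to establish is the explicit spherical‑average identity \eqref{decays} and the boundary traces \eqref{naprostej}.

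For \eqref{decays} the plan is to feed \eqref{cops} with the moments of $\omega_0$. Since $\omega_0$ is carried by the positive real axis with density $\alpha t^{\alpha-1}$, for $u=t>0$ one has $u^n=t^n$, so from \eqref{momenty}--\eqref{momenty_zewn},
\[
m_{r,n}(\omega_0)=\alpha\int_0^r t^{n+\alpha-1}\,\d t=\frac{\alpha}{n+\alpha}\,r^{n+\alpha},\qquad
M_{r,k}(\omega_0)=\alpha\int_r^\infty t^{\alpha-k-1}\,\d t=\frac{\alpha}{k-\alpha}\,r^{\alpha-k},
\]
the outer integral converging precisely because $0<\alpha<1\le k$. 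Substituting into \eqref{cops}, every term of both series carries the same power $r^{2\alpha-2}$, so that
\[
\frac{1}{2\pi}\int_0^{2\pi}|v(re^{i\theta})|^2\,\d\theta=\frac{\alpha^2}{4\pi^2}\,r^{2\alpha-2}\left(\sum_{n=0}^\infty\frac{1}{(n+\alpha)^2}+\sum_{k=1}^\infty\frac{1}{(k-\alpha)^2}\right).
\]
The bracket is exactly $\sum_{n\in\Z}(n+\alpha)^{-2}$, which equals $\pi^2/\sin^2(\pi\alpha)$ by the classical partial‑fraction expansion of $\pi^2/\sin^2(\pi z)$; multiplying by $2\pi$ and by the arclength factor $r$ then yields \eqref{decays}.

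For the traces \eqref{naprostej} I would insert the description of $\omega_0$ into the Biot--Savart law \eqref{B-S}: with $y=(t,0)$ one has $(x-y)^\perp=(-x_2,x_1-t)$ and $|x-y|^2=(x_1-t)^2+x_2^2$, so
\[
v_1(x_1,x_2)=-\frac{\alpha x_2}{2\pi}\int_0^\infty\frac{t^{\alpha-1}}{(x_1-t)^2+x_2^2}\,\d t,\qquad
v_2(x_1,x_2)=\frac{\alpha}{2\pi}\int_0^\infty\frac{(x_1-t)\,t^{\alpha-1}}{(x_1-t)^2+x_2^2}\,\d t.
\]
Fixing $x_1=s>0$ and sending $x_2\to0^\pm$: in $v_1$ the factor $\frac1\pi\frac{|x_2|}{(s-t)^2+x_2^2}$ is a (restriction of the) half‑plane Poisson kernel, i.e.\ an approximate identity concentrating at the interior point $t=s$, so the integral tends to $s^{\alpha-1}$ and, tracking the sign of $x_2$, $v_1^\pm(s,0)=\mp\frac{\alpha}{2}s^{\alpha-1}$, which is the jump across $\Sigma$. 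In $v_2$ the kernel $\frac{s-t}{(s-t)^2+x_2^2}$ is odd in $s-t$ and converges to $\frac{1}{s-t}$ away from $t=s$, so the limit is the principal value $\frac{\alpha}{2\pi}\,PV\!\int_0^\infty\frac{t^{\alpha-1}}{s-t}\,\d t$, the same from both sides; this is \eqref{naprostej}.

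The step that requires care---and the main obstacle---is making these two limits rigorous. One must control the integrand near $t=0$, where $t^{\alpha-1}$ is singular but integrable since $\alpha>0$, and near $t=\infty$, where in $v_1$ it decays like $t^{\alpha-3}$ and in the $v_2$ principal value like $t^{\alpha-2}$, the latter integrable exactly because $\alpha<1$; the approximate‑identity convergence and the Sokhotski--Plemelj/Privalov‑type limit at the interior point $t=s$ then follow from the smoothness of $t\mapsto t^{\alpha-1}$ there. As an internal consistency check one may evaluate $PV\!\int_0^\infty\frac{t^{\alpha-1}}{s-t}\,\d t=\pi\cot(\pi\alpha)\,s^{\alpha-1}$, whence $|v^\pm(s,0)|^2=\frac{\alpha^2}{4\sin^2(\pi\alpha)}s^{2\alpha-2}$, in agreement with the spherical‑average value furnished by \eqref{decays}.
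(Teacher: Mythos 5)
Your proposal is correct, and for the boundary traces \eqref{naprostej} it is essentially the paper's argument: both reduce to the two limits \eqref{harm} (Poisson-kernel/approximate-identity for the normal component, giving the $\mp\frac{\alpha}{2}s^{\alpha-1}$ jump) and \eqref{anharm} (Sokhotski--Plemelj-type limit giving the principal value for the tangential component). The paper spends most of its proof making these rigorous — splitting off a neighbourhood of $t=0$ for the Poisson limit, and for the principal value using the symmetrization $s\mapsto 2x_1-s$ together with the mean value theorem and dominated convergence to kill the contribution of $(x_1-\epsilon,x_1+\epsilon)$ — whereas you only name the mechanism; but you correctly identify this as the delicate step and the standard argument you invoke does go through. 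For \eqref{decays} you take a genuinely different (and more self-contained) route: the paper in its proof only derives the homogeneity \eqref{homo}, which yields the power law $r^{2\alpha-1}$ but not the constant, and cites \cite{COPS} for the exact value; you instead compute the inner and outer moments $m_{r,n}=\frac{\alpha}{n+\alpha}r^{n+\alpha}$, $M_{r,k}=\frac{\alpha}{k-\alpha}r^{\alpha-k}$, insert them into \eqref{cops}, and sum $\sum_{m\in\Z}(m+\alpha)^{-2}=\pi^2/\sin^2(\pi\alpha)$ to recover the constant $\frac{\pi\alpha^2}{2\sin^2(\pi\alpha)}$ explicitly. This is precisely the alternative the paper alludes to in the remark following the proposition without carrying it out, and it buys a complete derivation of the constant; the paper's homogeneity argument is shorter but, on its own, only establishes the scaling. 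Your closing consistency check ($PV\int_0^\infty\frac{t^{\alpha-1}}{s-t}\,dt=\pi\cot(\pi\alpha)s^{\alpha-1}$, hence $|v^\pm(s,0)|^2=\frac{\alpha^2}{4\sin^2(\pi\alpha)}s^{2\alpha-2}$) is a nice touch not present in the paper.
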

\noindent
\begin{proof}[Proof of Proposition \ref{wlasnosci}]
First assertions follow \cite[Theorem 2.1 and Proposition 1.4]{COPS}. The claim \eqref{decays} is computed
in \cite[Section 3.1]{COPS}. It is also implied by the following fact: for any $t>0$ and any $x\in\R^2$ we have
\begin{equation}\label{homo}
v(tx)=t^{\alpha-1}v(x).
\end{equation}
Indeed,
\begin{eqnarray*}
v(tx)&=&\int_{\R^2}\frac{(tx-y)^\perp}{|tx-y|^2}d\omega_0(y)=\int_0^\infty \frac{(tx_1-s, tx_2)^\perp}{(tx_1-s)^2+(tx_2)^2}\alpha s^{\alpha-1}ds\\
&=&\int_0^\infty\frac{(tx_1-t\xi, tx_2)^\perp}{(tx_1-t\xi)^2+(tx_2)^2}\alpha (t\xi)^{\alpha-1}td\xi=t^{\alpha-1}\int_0^\infty\frac{(x_1-\xi,x_2)^\perp}{(x_1-\xi)^2+x_2^2}\alpha \xi^{\alpha-1}d\xi\\
&=&t^{\alpha-1}v(x).
\end{eqnarray*}
The only thing that is left to justify in Proposition \ref{wlasnosci} is \eqref{naprostej}. Formula \eqref{naprostej} reduces to proving the following two identities:
\begin{equation}\label{harm}
\lim_{x_2\rightarrow 0^\pm}\frac{1}{2\pi}\int_0^\infty\frac{x_2s^{\alpha-1}}{(x_1-s)^2+x_2^2}ds=\pm \frac{1}{2} x_1^{\alpha-1}
\end{equation}
and
\begin{equation}\label{anharm}
\lim_{x_2\rightarrow 0}\frac{1}{2\pi}\int_0^\infty \frac{(x_1-s)s^{\alpha-1}}{(x_1-s)^2+x_2^2}ds=\frac{1}{2\pi}PV\int_0^\infty\frac{s^{\alpha-1}}{x_1-s}ds
\end{equation}
for $x_1>0$, $\alpha\in(0,1)$.

We first show \eqref{harm}. We fix $x_1>0$, next we fix
$0<\epsilon<\frac{x_1}{2}.$
Without loss of generality we assume $x_2>0$. We calculate
\begin{align*}
\frac{1}{2\pi}&\int_0^\infty \frac{x_2s^{\alpha-1}}{(x_1-s)^2+x_2^2}ds=\frac{1}{2\pi}\left(\int_0^\epsilon +\int_\epsilon^\infty\right)\\
\leq\frac{1}{2\pi}&\frac{x_2}{(x_1/2)^2}\int_0^\epsilon s^{\alpha-1}ds+\frac{1}{2\pi}\int_{\epsilon}^\infty \frac{x_2s^{\alpha-1}}{(x_1-s)^2+x_2^2}ds\\
\leq \frac{1}{2\pi}& \frac{1}{\alpha}\epsilon^{\alpha}\frac{x_2}{(x_1/2)^2}+\frac{1}{2\pi}\int_{\epsilon}^\infty \frac{s^{\alpha-1}}{x_2}\frac{1}{\left(\frac{x_1-s}{x_2}\right)^2+1}ds.
\end{align*}
The first term on the right-hand side goes to $0$ with $x_2\rightarrow 0$ independently of $\epsilon$. Let us focus on the second term.
\[
\int_{\epsilon}^\infty \frac{s^{\alpha-1}}{x_2}\frac{1}{\left(\frac{x_1-s}{x_2}\right)^2+1}ds=\int_{\frac{x_1-\epsilon}{x_2}}^{-\infty}\frac{(x_1-\xi x_2)^{\alpha-1}}{\xi^2+1}d\xi
\]
\[
=-\int_{-\infty}^{\frac{x_1-\epsilon}{x_2}}\frac{(x_1-\xi x_2)^{\alpha-1}}{\xi^2+1}d\xi.
\]
By the Lebesgue monotone convergence theorem we infer that (notice that $(x_1-\xi x_2)^{\alpha-1}\leq \epsilon^{\alpha-1}$) with $x_2\rightarrow 0$ the last integral tends to $x_1^{\alpha-1} \left(\arctan(\infty)-\arctan(-\infty) \right)=x_1^{\alpha-1}\pi$. So that \eqref{harm} holds.

To justify \eqref{anharm} we calculate
\[
\int_0^\infty \frac{(x_1-s)s^{\alpha-1}}{(x_1-s)^2+x_2^2}ds=\int_0^{x_1-\epsilon}+\int_{x_1-\epsilon}^{x_1+\epsilon}
+\int_{x_1+\epsilon}^\infty.
\]
The limit $x_2\rightarrow 0$ of the first and third terms, followed by the limit $\epsilon\rightarrow 0$ gives the desired result. We only need to justify that the limit
\[
\lim_{\epsilon\rightarrow 0}\lim_{x_2\rightarrow 0}\int_{x_1-\epsilon}^{x_1+\epsilon}\frac{(x_1-s)s^{\alpha-1}}{(x_1-s)^2+x_2^2}\,ds=0.
\]
Indeed, note that
$$
\int_{x_1-\epsilon}^{x_1+\epsilon}\frac{(x_1-s)s^{\alpha-1}}{(x_1-s)^2+x_2^2}\,ds = \int_{x_1-\epsilon}^{x_1}\frac{(x_1-s)s^{\alpha-1}}{(x_1-s)^2+x_2^2}\,ds - \int_{x_1-\epsilon}^{x_1}\frac{(x_1-s)(2x_1-s)^{\alpha-1}}{(x_1-s)^2+x_2^2}\,ds,
$$
where in the second integral we did the obvious change of variables $s \mapsto 2x_1 - s$. Hence
$$
\int_{x_1-\epsilon}^{x_1+\epsilon}\frac{(x_1-s)s^{\alpha-1}}{(x_1-s)^2+x_2^2}\,ds = \int_{x_1-\epsilon}^{x_1}\frac{(x_1-s)\left(s^{\alpha-1}-(2x_1-s)^{\alpha-1}\right)}{(x_1-s)^2+x_2^2}\,ds.
$$
Thus, it is sufficient to check that the function $s \mapsto \frac{(x_1-s)\left(s^{\alpha-1}-(2x_1-s)^{\alpha-1}\right)}{(x_1-s)^2} = \frac{s^{\alpha-1}-(2x_1-s)^{\alpha-1}}{x_1-s}$ is integrable in the left neighbourhood of $x_1$ and then the statement follows easily from the dominated convergence theorem. Indeed, from the mean value theorem (note that we have $s \leq 2x_1 -s$)
$$
\int_{x_1-\epsilon}^{x_1} \left| \frac{s^{\alpha-1}-(2x_1-s)^{\alpha-1}}{x_1-s} \right| \,ds =  \int_{x_1-\epsilon}^{x_1} \left| \frac{(\alpha - 1) \theta_s^{\alpha-2} (2x_1-2s)}{x_1-s} \right| \, ds = (2-2\alpha)  \int_{x_1-\epsilon}^{x_1} \theta_s^{\alpha-2} \,ds,
$$
where $\theta_s \in [s, 2x_1-s]$. In particular
\begin{equation}
\label{eps}
\int_{x_1-\epsilon}^{x_1} \left| \frac{s^{\alpha-1}-(2x_1-s)^{\alpha-1}}{x_1-s} \right| \,ds  \lesssim \int_{x_1-\epsilon}^{x_1} s^{\alpha-2} \, ds = \frac{x_1^{\alpha-1}- (x_1-\epsilon)^{\alpha-1}}{\alpha-1}.
\end{equation}
Thus $s \mapsto \frac{s^{\alpha-1}-(2x_1-s)^{\alpha-1}}{x_1-s}$ is integrable on $[x_1-\epsilon, x_1]$ and we have the following inequality
$$
\left| \frac{(x_1-s)\left(s^{\alpha-1}-(2x_1-s)^{\alpha-1}\right)}{(x_1-s)^2+x_2^2}\,ds \right| \leq \frac{s^{\alpha-1}-(2x_1-s)^{\alpha-1}}{x_1-s}, \quad s \in [x_1-\epsilon, x_1].
$$
From dominated convergence theorem we get that
\begin{align*}
\lim_{x_2 \to 0} \int_{x_1-\epsilon}^{x_1+\epsilon}\frac{(x_1-s)s^{\alpha-1}}{(x_1-s)^2+x_2^2}\,ds &= \lim_{x_2 \to 0} \int_{x_1-\epsilon}^{x_1}\frac{(x_1-s)\left(s^{\alpha-1}-(2x_1-s)^{\alpha-1}\right)}{(x_1-s)^2+x_2^2}\,ds \\
&= \int_{x_1-\epsilon}^{x_1} \frac{s^{\alpha-1}-(2x_1-s)^{\alpha-1}}{x_1-s} \,ds.
\end{align*}
Moreover, from \eqref{eps},
$$
\lim_{\epsilon \to 0^+} \lim_{x_2 \to 0}\int_{x_1-\epsilon}^{x_1+\epsilon}\frac{(x_1-s)s^{\alpha-1}}{(x_1-s)^2+x_2^2}\,ds = \lim_{\epsilon\to 0^+} \int_{x_1-\epsilon}^{x_1} \frac{s^{\alpha-1}-(2x_1-s)^{\alpha-1}}{x_1-s} \,ds = 0.
$$
\end{proof}

Note that \eqref{decays} immediately implies \eqref{decay} for $\alpha > \frac12$. Similarly we see from \eqref{decays} that the integral $\int_{\partial B(0,r)} |v|^2 \, dS(x)$ is constant (doesn't depend on $r$) for $\alpha = \frac12$ and \eqref{decay} cannot hold. Notice that \eqref{decays} follows twofold. On the one hand it is a consequence of the general formula \eqref{cops} obtained in \cite{COPS}. On the other hand \eqref{decays} is a consequence of \eqref{homo}, a particular property of $v$ given by \eqref{B-S} with $\omega_0$ as in \eqref{stanys}. The more general approach will be utilized in Proposition \ref{lem:srednieZbiezne}.

\begin{lemma}\label{lem:pomoc}
Let $\alpha \in (0,1)$. Then
$$
PV \int_0^\infty \frac{t^{\alpha-1}}{1-t} \, dt = 0
$$
if and only if $\alpha = \frac12$.
\end{lemma}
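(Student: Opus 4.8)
The plan is to compute the principal value integral
$I(\alpha) := PV\int_0^\infty \frac{t^{\alpha-1}}{1-t}\,dt$
in closed form and then read off its zero set. The classical value of this integral is $I(\alpha)=\pi\cot(\pi\alpha)$ for $\alpha\in(0,1)$, and since $\cot(\pi\alpha)=0$ on $(0,1)$ precisely when $\alpha=\tfrac12$, the lemma follows at once. So the entire content is establishing this evaluation.

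I would obtain the formula by a keyhole contour argument (or, equivalently, by quoting the standard beta-function/residue computation). Consider $f(z)=\frac{z^{\alpha-1}}{1-z}$ with the branch of $z^{\alpha-1}=e^{(\alpha-1)\log z}$ cut along the positive real axis, $\arg z\in(0,2\pi)$. Integrate $f$ over a keyhole contour consisting of a large circle of radius $R$, a small circle of radius $\delta$ around the origin, two segments just above and just below the positive real axis, and small semicircular indentations of radius $\rho$ around the pole at $z=1$ on each of those two segments. The large-circle contribution vanishes as $R\to\infty$ since $|f|=O(R^{\alpha-2})$ with $\alpha<1$; the small-circle contribution vanishes as $\delta\to0$ since $|f|=O(\delta^{\alpha-1})$ with $\alpha>0$. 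On the lower segment $z=te^{2\pi i}$ one has $z^{\alpha-1}=t^{\alpha-1}e^{2\pi i(\alpha-1)}=t^{\alpha-1}e^{2\pi i\alpha}$, so the two straight segments combine to give $(1-e^{2\pi i\alpha})\cdot PV\int_0^\infty\frac{t^{\alpha-1}}{1-t}\,dt$. The two indentations around $z=1$ (one from above, one from below, traversed in opposite senses) together contribute $-\tfrac12(2\pi i)[\,\mathrm{res}_{z=1}f + e^{2\pi i\alpha}\,\mathrm{res}_{z=1}f\,]$ type terms; a careful bookkeeping of the half-residues gives a term proportional to $(1+e^{2\pi i\alpha})$. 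Since the contour encloses no singularity (the pole at $z=1$ is excluded by the indentations), the total is zero, and solving the resulting linear equation for the $PV$ integral yields
$$
PV\int_0^\infty\frac{t^{\alpha-1}}{1-t}\,dt = \pi i\,\frac{1+e^{2\pi i\alpha}}{1-e^{2\pi i\alpha}} = \pi\cot(\pi\alpha).
$$

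With this identity in hand the lemma is immediate: for $\alpha\in(0,1)$ the quantity $\pi\alpha$ ranges over $(0,\pi)$, and $\cot$ has its unique zero on that interval at $\pi/2$, i.e.\ at $\alpha=\tfrac12$; conversely $\cot(\pi/2)=0$ gives the ``if'' direction. I expect the main obstacle to be the careful handling of the indented keyhole contour near $z=1$ — tracking the correct phase $e^{2\pi i\alpha}$ on the return segment and correctly accounting for the two half-residue contributions with their opposite orientations, so that the coefficients $(1-e^{2\pi i\alpha})$ and $(1+e^{2\pi i\alpha})$ come out right. An alternative that sidesteps contour bookkeeping is to substitute $t=e^{u}$, reducing the integral to $PV\int_{-\infty}^{\infty}\frac{e^{\alpha u}}{1-e^{u}}\,du$, and then shift the contour of integration up to the line $\mathrm{Im}\,u=2\pi$ in the complex $u$-plane, picking up the residue at the simple pole $u=0$ on the way; this produces the same functional equation $(1-e^{2\pi i\alpha})I(\alpha)=\text{(residue term)}$ and hence the same closed form, but one may prefer whichever derivation is cleanest to present.
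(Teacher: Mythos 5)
Your proposal is correct in substance but takes a genuinely different route from the paper. You evaluate the principal value in closed form, \(PV\int_0^\infty t^{\alpha-1}/(1-t)\,dt=\pi\cot(\pi\alpha)\), via a keyhole contour with indentations at \(z=1\), and then read off that \(\cot(\pi\alpha)\) vanishes on \((0,1)\) only at \(\alpha=\tfrac12\). The paper instead avoids complex analysis entirely: it substitutes \(t\mapsto 1/t\) on the tail \(\int_{1+\epsilon}^\infty\), folds the integral onto \((0,1)\), and shows after controlling the \(\epsilon\)-mismatch near \(t=1\) that the principal value equals \(\int_0^1\frac{t^{\alpha-1}-t^{-\alpha}}{1-t}\,dt\), whose integrand has a fixed sign determined by \(\operatorname{sgn}(2\alpha-1)\). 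The paper's argument is more elementary and delivers the sign of the integral for every \(\alpha\) (which is exactly what is needed for the "only if" direction, and which the later proof of Theorem \ref{tw_wieksze} implicitly uses); yours delivers the exact value, which is stronger but requires the contour bookkeeping you yourself flag as the delicate point. One small inconsistency to fix: your intermediate expression has the wrong sign, since \(\pi i\,\frac{1+e^{2\pi i\alpha}}{1-e^{2\pi i\alpha}}=-\pi\cot(\pi\alpha)\); the correct outcome of the contour computation is \(PV\int_0^\infty \frac{t^{\alpha-1}}{1-t}\,dt=-\pi i\,\frac{1+e^{2\pi i\alpha}}{1-e^{2\pi i\alpha}}=\pi\cot(\pi\alpha)\), consistent with the paper's sign table (positive for \(\alpha<\tfrac12\), negative for \(\alpha>\tfrac12\)). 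This slip does not affect the conclusion, since the zero set of \(\pm\cot(\pi\alpha)\) is the same, but it should be corrected if the closed-form evaluation is presented.
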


\begin{proof}
Let us fix $\epsilon>0$ and consider
\[
\int_{1+\epsilon}^\infty \frac{t^{\alpha-1}}{1-t}\, dt = \left [ \begin{aligned}
t &= \frac{1}{\xi}\\
dt &= -\frac{1}{\xi^2}d\xi
\end{aligned}\right]
\int_0^{\frac{1}{1+\epsilon}}\frac{\xi^{1-\alpha}}{1 - \xi^{-1}}\frac{1}{\xi^2}\, d\xi = -\int_0^{\frac{1}{1+\epsilon}}\frac{\xi^{-\alpha}}{1-\xi}\, d\xi.
\]
By the above and since $1-\epsilon < \frac{1}{1+\epsilon}$ we have
\[
\int_0^{1-\epsilon}\frac{t^{\alpha-1}}{1-t}\, dt +  \int_{1+e}^\infty \frac{t^{\alpha-1}}{1-t}\, dt =
\int_0^{1-\epsilon}\frac{t^{\alpha-1}-t^{-\alpha}}{1-t}\, dt - \int_{1-\epsilon}^\frac{1}{1+\epsilon}\frac{t^{-\alpha}}{1-t}\, dt.
\]
Let us show that the last integral tends to $0$ as $\epsilon\to 0^+$:
\[
\begin{aligned}
\left | \int_{1-\epsilon}^\frac{1}{1+\epsilon}\frac{t^{-\alpha}}{1-t}\, dt\right | & \leq (1-\epsilon)^{-\alpha}\int_{1-\epsilon}^\frac{1}{1+\epsilon}\frac{1}{1-t}\, dt  =(1-\epsilon)^{-\alpha}\left( -\log\left ( 1 - \frac{1}{1+\epsilon}\right )
 + \log(\epsilon) \right)\\
 &= (1-\epsilon)^{-\alpha}\log\left( 1 + \epsilon\right)\to 0\quad \text{as }\epsilon\to 0^+.
 \end{aligned}
\]
Therefore
\[
PV \int_0^\infty\frac{t^{\alpha-1}}{1-t}\, dt = \lim_{\epsilon\to 0^+}\int_0^{1-\epsilon}\frac{t^{\alpha-1}-t^{-\alpha}}{1-t}\, dt.
\]
The above integral is equal to $0$ if and only if $\alpha = \frac{1}{2}$. Otherwise we have for any $t\in (0,1)$
\[
t^{\alpha-1}- t^{-\alpha} = t^{-\alpha}\left( t^{2\alpha-1} - 1\right) \begin{cases} < 0&\text{for } \alpha\in \left ( \frac{1}{2},1\right),\\
>0 &\text{for } \alpha\in \left ( 0,\frac{1}{2}\right ).
\end{cases}
\]
We can conclude that
\[
PV \int_0^\infty \frac{t^{\alpha-1}}{1-t} \, dt \begin{cases}< 0&\text{for } \alpha\in \left ( \frac{1}{2},1\right),\\
=0&\text{for }\alpha = \frac{1}{2},\\
>0 &\text{for } \alpha\in \left ( 0,\frac{1}{2}\right )
\end{cases}
\]
\end{proof}

\begin{proof}[Proof of Theorem \ref{tw_2/3}]
We have that $\mu = \frac{2}{3}$, so $\alpha = \frac{1}{2}$. Note that, from Lemma \ref{lemat} we get
\begin{eqnarray*}
&&\int_{\R^2}\sum_{i,j=1}^2 v_iv_j\partial_i\phi_j  \, dx=\lim_{r\rightarrow 0}\left(\int_{\partial B(0,r)}(v\cdot n)(v\cdot \phi) \, dS(x)
-\int_{\partial B(0,r)}\frac{1}{2}|v|^2n\cdot \phi\, dS(x)\right) \\
&+&\lim_{r\rightarrow 0}\left(\int_{(r,\infty)}\frac{1}{2}\left(|v^+|^2-|v^-|^2\right) \phi_1 \, dx_1
+\int_{(r,\infty)} \phi \cdot (v^+ - v^-) v_2^+ \, dx_1 \right).
\end{eqnarray*}
Note that \eqref{naprostej} implies that $|v^+|=|v^-|$ on $(r, \infty)$ for any $r > 0$, and therefore
$$
\int_{(r,\infty)}\frac{1}{2}\left(|v^+|^2-|v^-|^2\right) \phi_1 \, dx_1  = 0.
$$
Note that, since $v_2 := v_2^+=v_2^-$, using \eqref{naprostej} and Lemma \ref{lem:pomoc}, we get
$$
\int_{(r,\infty)} \phi \cdot (v^+ - v^-) v_2^+ \, dx_1 = \int_{(r,\infty)}(v_1^+-v_1^-)v_2^+\phi_1 \, dx_1 = \int_r^\infty \alpha s^{\alpha-1} v_2(s,0) \phi_1(s,0) \, ds = 0,
$$
since \eqref{homo} implies
$$
v_2(s,0) = s^{\alpha-1} v_2(1,0) = s^{\alpha-1} PV \int_0^\infty \frac{t^{\alpha-1}}{1-t} \, dt  =0.
$$
Thus
$$
\int_{\R^2}\sum_{i,j=1}^2 v_iv_j\partial_i\phi_j  \, dx=\lim_{r\rightarrow 0}\left(\int_{\partial B(0,r)}(v\cdot n)(v\cdot \phi) \, dS(x)-\int_{\partial B(0,r)}\frac{1}{2}|v|^2n\cdot \phi\, dS(x)\right).
$$
Note that here $n = \frac{x}{r}$. Moreover we use \eqref{homo} and the change of variables $x \mapsto \frac{x}{r}$ and noting that $n$ is outer normal to the complement of a ball, to see that
\begin{align*}
&\quad \lim_{r\rightarrow 0}\left(\int_{\partial B(0,r)}(v\cdot n)(v\cdot \phi) \, dS(x)-\int_{\partial B(0,r)}\frac{1}{2}|v|^2n\cdot \phi\, dS(x)\right)  \\
&= \lim_{r\rightarrow 0}\left(-\int_{\partial B(0,1)}(v(x)\cdot x)(v (x)\cdot \phi(rx)) \, dS(x)+\int_{\partial B(0,1)}\frac{1}{2}|v(x)|^2x\cdot \phi(rx)\, dS(x)\right),
\end{align*}
which gives the desired result.
\end{proof}

\begin{proof}[Proof of Theorem \ref{tw_wieksze}]
From Lemma \ref{lemat} we have
\begin{eqnarray*}
&&\int_{\R^2}\sum_{i,j=1}^2 v_iv_j\partial_i\phi_j  \, dx=\lim_{r\rightarrow 0}\left(\int_{\partial B(0,r)}(v\cdot n)(v\cdot \phi) \, dS(x)
-\int_{\partial B(0,r)}\frac{1}{2}|v|^2n\cdot \phi\, dS(x)\right) \\
&+&\lim_{r\rightarrow 0}\left(\int_{(r,\infty)}\frac{1}{2}\left(|v^+|^2-|v^-|^2\right) \phi_1 \, dx_1
+\int_{(r,\infty)} \phi \cdot (v^+ - v^-) v_2^+  \, dx_1 \right).
\end{eqnarray*}
Note that, since $\alpha > \frac12$, we have \eqref{decay}. This immediately implies that
$$
\lim_{r\rightarrow 0}\left(\int_{\partial B(0,r)}(v\cdot n)(v\cdot \phi) \, dS(x)
-\int_{\partial B(0,r)}\frac{1}{2}|v|^2n\cdot \phi\, dS(x)\right) = 0.
$$
Moreover, since $|v^+| = |v^-|$, we also have
$$
\int_{(r,\infty)}\frac{1}{2}\left(|v^+|^2-|v^-|^2\right) \phi_1 \, dx_1 = 0.
$$
Since $v_2 := v_2^+ = v_2^-$ we also have
$$
\int_{(r,\infty)} \phi \cdot (v^+ - v^-) v_2^+  \, dx_1 = \int_{(r,\infty)}(v_1^+-v_1^-)v_2^+\phi_1 \, dx_1 = \int_r^\infty \alpha s^{\alpha-1} v_2(s,0) \phi_1(s,0) \, ds,
$$
however Lemma \ref{lem:pomoc} implies that $v_2(s,0) \neq 0$. Hence, the limit
$$
\lim_{r \to 0} \int_r^\infty \alpha s^{\alpha-1} v_2(s,0) \phi_1(s,0) \, ds
$$
may be considered as a distribution $(-v_2,0) \gamma \delta_\Sigma$ acting on a divergence-free $\phi$, where $\gamma$ is a measure on $\Sigma$ given by
$$
\gamma = \alpha x_1^{\alpha-1} \chi_{(0,\infty)} (x_1) \delta_0 (x_2).
$$
\end{proof}

As already noted in the introduction, Theorems \ref{tw_2/3} and \ref{tw_wieksze} indicate that, respectively, the decay of spherical averages over the origin of the spiral as well as velocity matching condition are necessary for the velocity field, corresponding to a vortex sheet, to satisfy the 2D Euler.

Observe that the Dirac delta on the right hand side of \eqref{Euler_2/3} in Theorem \ref{tw_2/3} can be interpreted as an impuls acting on the fluid, its intensity being given by $Y=\int_{\partial B(0,1)}\frac{1}{2}|v(x)|^2x-\left(v(x)\cdot x\right)v(x) dS(x)$. We remark that it is not the first result translating the impulse on the fluid acting on the edge of a sheet into the Dirac delta appearing on the right hand side of the 2D Euler, see \cite{lopesy}. We refer to \cite{lopesy} for the physical interpretation of the Prandtl-Munk vortex sheet \cite{munk} based on considerations of Saffman \cite{safman}.

\section{Kaden's spirals partial results}

This section is devoted to the analysis of the velocity field corresponding to the vorticity given by the self-similar Kaden's spiral, see \cite{kaden}. Based on the results concerning the time-zero limits of the Kaden's spirals, see Theorem \ref{tw_2/3}, we are tempted to state a conjecture concerning the inhomogeneous Euler equation satisfied by the Kaden's spiral in the case $\mu=\frac23$. As we show in what follows, the spherical averages over the spiral origin do not converge to $0$ with the radius $r\to0^+$. However, we're only able to reduce the conjecture to the integral, which appears below. We need an exact value of the integral which seems to be highly oscillatory and apparently it cannot be treated in a straightforward manner via the residue theorem.

Let $\Sigma_t$ denote the Kaden's spiral whose position is described by \eqref{definicje}, and the density of the vorticity at the spiral given by \eqref{gestosc}. Let $v$ be the velocity field corresponding to $\Sigma_t$ via the Biot-Savart law \eqref{B-S}, see \cite[Theorem 2.1 and Proposition 1.4]{COPS}. We can easily compute that $v(\cdot, t) \in C^1 (\R^2 \setminus \Sigma_t)$ for any $t > 0$. Therefore, from \cite[Theorem 2.1]{COPS} it follows that $\div v = 0$ and $\curl v = \omega_t$ in the sense of distributions, and $\div v = 0$, $\curl v = 0$ on $\R^2 \setminus \Sigma_t$. Moreover a simple change of variables gives that
\begin{equation}\label{eq:scaling}
v(x, t)= t^{\mu-1} v \left(\frac{x}{t^\mu}, 1\right).
\end{equation}
Indeed, put
$$
w(x) := v(x,1) = \left( 2 - \frac{1}{\mu} \right) \frac{i}{2\pi} \int_0^\infty \frac{s^{1-1/\mu}}{\overline{x} - s \exp \left( - \frac{i}{2\pi} s^{-1/\mu} \right)} \, ds.
$$
Then, to get \eqref{eq:scaling} we compute
\begin{align*}
t^{\mu-1} w \left( \frac{x}{t^\mu} \right) &= t^{\mu-1} \left( 2 - \frac{1}{\mu} \right) \frac{i}{2\pi} \int_0^\infty \frac{s^{1-1/\mu}}{\overline{x} t^{-\mu} - s \exp \left( - \frac{i}{2\pi} s^{-1/\mu} \right)} \, ds \\
&= t^{\mu-1} \left( 2 - \frac{1}{\mu} \right) \frac{i}{2\pi} \int_0^\infty \frac{s^{1-1/\mu} }{\overline{x}  - st^{\mu} \exp \left( - \frac{i}{2\pi} s^{-1/\mu} \right)} \, t^{\mu}ds \\
&= t^{\mu-1} \left( 2 - \frac{1}{\mu} \right) \frac{i}{2\pi} \int_0^\infty \frac{\zeta^{1-1/\mu} t^{-\mu+1} }{\overline{x}  - \zeta \exp \left( - \frac{it}{2\pi} \zeta^{-1/\mu} \right)} \, d\zeta = v(x,t).
\end{align*}
Therefore $v$ is completely defined by the profile $w$.

\begin{lemma}\label{lem:est}
Let $\mu \in (0,1)$ and $n \geq 1$. The following properites hold true
\begin{align}
\label{est1} &|m_{r,n}(\omega_t)| \lesssim \frac{1}{n+2-\frac{1}{\mu}} r^{n+2-\frac{1}{\mu}}, \\
\label{est2} &|m_{r,0}(\omega_t)| = r^{2-\frac{1}{\mu}}, \\
\label{est3} &|M_{r,n}(\omega_t)| \lesssim -\frac{1}{-n+2-\frac{1}{\mu}} r^{-n+2-\frac{1}{\mu}}.
\end{align}
\end{lemma}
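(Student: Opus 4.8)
The plan is to unwind the definitions \eqref{momenty}--\eqref{momenty_zewn} using the one–dimensional integral representation \eqref{gestosc}, and then to reduce to elementary power–law integrals. Write $\zeta(s):=s\exp\!\left(\tfrac{it}{2\pi}s^{-1/\mu}\right)$ for the point of the spiral carrying parameter $s$, and recall $g_t(s)=\left(2-\tfrac1\mu\right)s^{1-1/\mu}$. The only structural observation needed is that $|\zeta(s)|=s$, so $s\mapsto|\zeta(s)|$ is strictly increasing; hence, up to a Lebesgue–null set of parameters, $\{s>0:\zeta(s)\in B(0,r)\}=(0,r)$ and $\{s>0:\zeta(s)\in\C\setminus B(0,r)\}=(r,\infty)$. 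Plugging this, together with $\zeta(s)^{\pm n}=s^{\pm n}\exp\!\left(\pm\tfrac{int}{2\pi}s^{-1/\mu}\right)$ and the form of $g_t$, into \eqref{gestosc} turns the moments into
\begin{align*}
m_{r,n}(\omega_t)&=\left(2-\tfrac1\mu\right)\int_0^r s^{n+1-1/\mu}\exp\!\left(\tfrac{int}{2\pi}s^{-1/\mu}\right)ds\qquad(n\ge1),\\
M_{r,n}(\omega_t)&=\left(2-\tfrac1\mu\right)\int_r^\infty s^{-n+1-1/\mu}\exp\!\left(-\tfrac{int}{2\pi}s^{-1/\mu}\right)ds\qquad(n\ge1),
\end{align*}
while $m_{r,0}(\omega_t)=\omega_t(B(0,r))=\left(2-\tfrac1\mu\right)\int_0^r s^{1-1/\mu}\,ds=r^{2-1/\mu}$, which is already \eqref{est2}.

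To obtain \eqref{est1} and \eqref{est3} I would simply discard the unimodular phase: since $\left|\exp\!\left(\pm\tfrac{int}{2\pi}s^{-1/\mu}\right)\right|=1$, pulling the modulus inside the integral gives
\[
|m_{r,n}(\omega_t)|\le\left|2-\tfrac1\mu\right|\int_0^r s^{n+1-1/\mu}\,ds,\qquad |M_{r,n}(\omega_t)|\le\left|2-\tfrac1\mu\right|\int_r^\infty s^{-n+1-1/\mu}\,ds.
\]
Evaluating the two power integrals, $\int_0^r s^{n+1-1/\mu}\,ds=\tfrac{1}{n+2-1/\mu}r^{n+2-1/\mu}$ and $\int_r^\infty s^{-n+1-1/\mu}\,ds=-\tfrac{1}{-n+2-1/\mu}r^{-n+2-1/\mu}$, the latter being a positive quantity because the exponent $-n+2-\tfrac1\mu$ is negative for $n\ge1$ and $\mu\in(0,1)$. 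Absorbing the $\mu$–dependent factor $\left|2-\tfrac1\mu\right|$ into the constant implicit in $\lesssim$ yields exactly \eqref{est1} and \eqref{est3}.

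The hard part is essentially nonexistent; the one point deserving a line of justification is the convergence of the two power integrals. For the inner moments this requires $n+2-\tfrac1\mu>0$ (so that $s^{n+1-1/\mu}$ is integrable at $0$), which holds for $n\ge1$ as soon as $\mu>\tfrac1{n+2}$, in particular on the whole range $\mu\in(1/2,1)$ relevant here; for the outer moments one needs $-n+2-\tfrac1\mu<0$ (integrability at $\infty$), which holds for every $n\ge1$ and $\mu\in(0,1)$. It is worth stressing that this argument uses no cancellation from the oscillatory factor $\exp\!\left(\pm\tfrac{int}{2\pi}s^{-1/\mu}\right)$: any improvement of the decay in $n$ would require a non-stationary phase / integration-by-parts analysis of these integrals. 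The crude bounds above are, however, all that is needed in the sequel — for instance to feed the series in \eqref{cops} and estimate the spherical averages over the origin of the spiral.
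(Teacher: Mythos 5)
Your proof is correct and follows essentially the same route as the paper's: express the moments via the density $g_t(s)=\left(2-\tfrac1\mu\right)s^{1-1/\mu}$ using \eqref{gestosc}, discard the unimodular phase, and evaluate the resulting power integrals. Your side remark that convergence of the inner-moment integral at $s=0$ requires $\mu>\tfrac{1}{n+2}$ --- so the stated hypothesis $\mu\in(0,1)$ is slightly too generous for \eqref{est1} when, e.g., $n=1$ and $\mu\le\tfrac13$ --- is a point the paper's proof glosses over, but it is harmless on the range $\mu\in(1/2,1)$ actually used.
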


\begin{proof}
Using \eqref{gestosc}, we get
\begin{align*}
|m_{r,n}(\omega_t)| &= \left| \int_{B(0,r)} u^n \, d\omega_t (u) \right| = \left(2 - \frac{1}{\mu}\right) \left| \int_0^r s^{n+1-\frac{1}{\mu}} \exp \left( \frac{itn}{2\pi} s^{-1/\mu} \right) \, ds \right| \\
&\lesssim \int_0^r s^{n+1-\frac{1}{\mu}} \, ds = \frac{1}{n+2-\frac{1}{\mu}} r^{n+2-\frac{1}{\mu}},
\end{align*}
which proves \eqref{est1}. \eqref{est2} follows directly from \eqref{momenty}. To show \eqref{est3} we compute similarly
\begin{align*}
|M_{r,n}(\omega_t)| &= \left| \int_{\C \setminus B(0,r)} u^{-n} \, d\omega_t (u) \right| = \left(2 - \frac{1}{\mu}\right) \left| \int_r^\infty s^{-n+1-\frac{1}{\mu}} \exp \left( -\frac{itn}{2\pi} s^{-1/\mu} \right) \, ds \right| \\
&\lesssim \int_r^\infty s^{-n+1-\frac{1}{\mu}} \, ds = - \frac{1}{-n+2-\frac{1}{\mu}} r^{-n+2-\frac{1}{\mu}}.
\end{align*}
\end{proof}

\begin{proposition}\label{lem:srednieZbiezne}
Let $\mu \in \left( \frac23, 1 \right)$. Let $v$ be the velocity field related to the measure $\omega_t$ by the Biot-Savart law. Then
\begin{equation}\label{srednieZbiezne}
\lim_{r \to 0^+} \int_{\partial B(0,r)} |v|^2 \, dS(x) = 0.
\end{equation}
\end{proposition}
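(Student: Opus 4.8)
The plan is to extract the decay from the exact expansion \eqref{cops} of the spherical average, fed with the moment bounds of Lemma \ref{lem:est}. Put $\alpha := 2-\tfrac1\mu$, so that the hypothesis $\mu\in(\tfrac23,1)$ is exactly the requirement $\alpha\in(\tfrac12,1)$. Since $\int_{\partial B(0,r)}|v|^2\,dS(x)=r\int_0^{2\pi}|v(re^{i\theta})|^2\,d\theta$, formula \eqref{cops} with $\omega=\omega_t$ yields
\[
\int_{\partial B(0,r)}|v|^2\,dS(x)=\frac{r}{2\pi}\left(\sum_{n=0}^\infty r^{-2n-2}|m_{r,n}(\omega_t)|^2+\sum_{k=1}^\infty r^{2k-2}|M_{r,k}(\omega_t)|^2\right),
\]
so the task reduces to estimating the two numerical series on the right.

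For the inner sum I would use \eqref{est2} for the term $n=0$, where $r^{-2}|m_{r,0}(\omega_t)|^2=r^{2\alpha-2}$ exactly, and \eqref{est1} for $n\geq1$, which gives $r^{-2n-2}|m_{r,n}(\omega_t)|^2\lesssim(n+\alpha)^{-2}r^{2\alpha-2}$; since $\alpha>0$ the series $\sum_{n\geq0}(n+\alpha)^{-2}$ converges, so the whole inner sum is $\lesssim r^{2\alpha-2}$. For the outer sum, note that $-k+2-\tfrac1\mu=\alpha-k<0$ for every $k\geq1$, so \eqref{est3} becomes $|M_{r,k}(\omega_t)|\lesssim(k-\alpha)^{-1}r^{\alpha-k}$ and hence $r^{2k-2}|M_{r,k}(\omega_t)|^2\lesssim(k-\alpha)^{-2}r^{2\alpha-2}$; since $\alpha<1$ the denominators stay bounded away from $0$ and $\sum_{k\geq1}(k-\alpha)^{-2}<\infty$, so the outer sum is $\lesssim r^{2\alpha-2}$ as well. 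Combining the two, $\int_{\partial B(0,r)}|v|^2\,dS(x)\lesssim r\cdot r^{2\alpha-2}=r^{2\alpha-1}\to0$ as $r\to0^+$, the exponent $2\alpha-1$ being positive precisely because $\alpha>\tfrac12$; this reproduces, for all $t$, the kind of bound already recorded in \eqref{decays} for the limiting measure $\omega_0$.

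The only genuine point to watch is that this term-by-term estimation is legitimate only if the implicit constants in Lemma \ref{lem:est} do not depend on the index $n$. They do not: in the proof of that lemma the symbol $\lesssim$ hides only the modulus $1$ of the exponential factor together with the fixed coefficient $2-\tfrac1\mu$, so a single constant serves all $n$ and $k$, and summing the bounds against the convergent $p$-series $\sum(n+\alpha)^{-2}$ and $\sum(k-\alpha)^{-2}$ is justified. (Convergence of the two series in \eqref{cops} itself is not in question, $v$ being an $L^2_{loc}$ field by Proposition \ref{wlasnosci} and \cite{COPS}.) With this uniformity in hand, the rest is the elementary power counting indicated above, and I do not expect any further difficulty.
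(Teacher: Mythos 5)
Your proposal is correct and follows essentially the same route as the paper's proof: both apply the moment expansion \eqref{cops} (i.e.\ \cite[Theorem 1.2]{COPS}) together with the bounds of Lemma \ref{lem:est}, sum the resulting $p$-series, and obtain $\int_{\partial B(0,r)}|v|^2\,dS(x)\lesssim r^{3-2/\mu}\to 0$. Your explicit remark that the implicit constants in Lemma \ref{lem:est} are uniform in the index is a worthwhile clarification, but it does not change the argument.
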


\begin{proof}
Observe that
\begin{align*}
\int_{\partial B(0,r)} |v|^2 \, dS(x) &=  r \int_0^{2\pi} \left| v( r e^{i \theta} ) \right|^2 \, d\theta.
\end{align*}
We recall that, from \cite[Theorem 1.2]{COPS}, we have
\begin{align*}
\int_0^{2\pi} \left| v( re^{i\theta} ) \right|^2 \, d\theta  = \frac{1}{2\pi} \left( \sum_{n=0}^\infty r^{-2n-2} |m_{r,n}(\omega_t)|^2 + \sum_{n=1}^\infty r^{2n-2} |M_{r,n} (\omega)|^2 \right)
\end{align*}
and from Lemma \ref{lem:est} we obtain the following estimate
\begin{align*}
\int_0^{2\pi} \left| v( re^{i\theta} ) \right|^2 \, d\theta &\lesssim \sum_{n=1}^\infty r^{-2n-2} |m_{r,n}(\omega_t)|^2 + r^{-2} |m_{r,0}(\omega_t)|^2 + \sum_{n=1}^\infty r^{2n-2} |M_{r,n} (\omega)|^2 \\
&\lesssim \sum_{n=1}^\infty \frac{1}{\left(n+2-\frac{1}{\mu}\right)^2} r^{2-\frac{2}{\mu}} + r^{2-\frac{2}{\mu}} + \sum_{n=1}^\infty \frac{1}{\left(-n+2-\frac{1}{\mu}\right)^2} r^{2-\frac{2}{\mu}} \\
&\lesssim r^{2-\frac{2}{\mu}}.
\end{align*}
Finally
$$
r \int_0^{2\pi} \left| v( re^{i\theta} ) \right|^2 \, d\theta \lesssim r^{3-\frac{2}{\mu}} \to 0 \mbox{ as } r \to 0^+, \quad \mbox{since } \mu \in \left( \frac23, 1 \right).
$$
\end{proof}

We observe that \eqref{srednieZbiezne} doesn't hold for $\mu = \frac{2}{3}$. Indeed, using \cite[Theorem 1.2]{COPS} we get
\begin{align*}
\int_{\partial B(0,r)} |v|^2 \, dS(x) &=  r \int_0^{2\pi} \left| v( r e^{i \theta} ) \right|^2 \, d\theta = \frac{r}{2\pi} \left( \sum_{n=0}^\infty r^{-2n-2} |m_{r,n}(\omega_t)|^2 + \sum_{n=1}^\infty r^{2n-2} |M_{r,n} (\omega)|^2 \right) \\
&\gtrsim r^{-1} | m_{r,0} (\omega_t) |^2 = r^{3-\frac{2}{\mu}} = 1.
\end{align*}

Our goal is to show a similar fact to Theorem \ref{tw_2/3} for the non-stationary case. However, while computing the velocity matching condition in this case, we arrive at the principle value of an integral that we're unable to compute, see \eqref{calka} below.

\begin{proposition}
Let $\mu =\frac{2}{3}$. The velocity matching condition
\begin{equation}\label{velmatch}
n \cdot (w - \mu z)_{| \Sigma_1} = 0
\end{equation}
holds if and only if
\begin{equation}\label{calka}
\frac{1}{2\pi} PV \int_0^\infty \frac{2\Gamma^3 - 2 \Gamma t^2 \cos \left( \frac{1}{2\pi} \left( t^{-3}-\Gamma^{-3} \right) \right) + \frac{3}{2\pi} \Gamma^{-2}t^2 \sin \left( \frac{1}{2\pi} \left( t^{-3}-\Gamma^{-3} \right) \right)}{\Gamma^4 + t^4 - 2 \Gamma^2 t^2 \cos \left( \frac{1}{2\pi} \left( t^{-3} - \Gamma^{-3} \right) \right)} \, dt = \frac{1}{\pi}
\end{equation}
for any $\Gamma > 0$.
\end{proposition}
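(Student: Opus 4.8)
The plan is to reduce the velocity matching condition \eqref{velmatch} to the principal value identity \eqref{calka} by a direct, if somewhat lengthy, computation. The starting point is that for $\mu=\frac23$ the sheet $\Sigma_1$ is a smooth, non-self-intersecting spiral, so that the relevant boundary value of the velocity $w=v(\cdot,1)$ along $\Sigma_1$ is given by the principal value of the Biot--Savart integral \eqref{B-S}, exactly as in the flat model \eqref{naprostej} (cf. \cite[Theorem 2.1 and Proposition 1.4]{COPS}); moreover, as for any vortex sheet, the jump $w^+-w^-$ is tangential to $\Sigma_1$ — this is precisely what \eqref{naprostej} exhibits in the flat case, where the tangential component $v_1$ jumps while the normal component $v_2$ is continuous and equal to the principal value — so the jump does not affect the normal component entering \eqref{velmatch}. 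I will treat the justification of this point as the main item (see the last paragraph); granting it, the rest is bookkeeping of complex arithmetic.

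First I would parametrize $\Sigma_1$ by the cumulative vorticity $\Gamma$. By \eqref{definicje} with $\mu=\frac23$ one has $R(\Gamma,1)=\Gamma^{2}$ and $\Theta(\Gamma,1)=\frac{1}{2\pi}\Gamma^{-3}$, so a point of $\Sigma_1$ is $z(\Gamma)=\Gamma^{2}e^{i\phi(\Gamma)}$ with $\phi(\Gamma)=\frac{1}{2\pi}\Gamma^{-3}$; since $|z(\Gamma)|=\Gamma^{2}$ is strictly increasing, $\Sigma_1$ is injectively parametrized. In this variable the vorticity measure is just $d\Gamma$ (the defining property of $\Gamma$; equivalently, in \eqref{gestosc} the substitution $s=\Gamma^{2}$ turns $(2-\frac1\mu)s^{1-\frac1\mu}\,ds$ into $d\Gamma$). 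Writing \eqref{B-S} in complex notation as $w(z)=\frac{i}{2\pi}\int\frac{d\omega_1(\zeta)}{\overline z-\overline\zeta}$ and specializing to the sheet gives
\[
w(z(\Gamma))=\frac{i}{2\pi}\,PV\int_0^\infty\frac{dt}{\overline{z(\Gamma)}-\overline{z(t)}},
\]
with the principal value taken at $t=\Gamma$; near $t=\Gamma$ the integrand behaves like $(t-\Gamma)^{-1}$, it is bounded as $t\to 0^+$, and it decays like $t^{-2}$ as $t\to\infty$, so the integral converges. Differentiating, $z'(\Gamma)=e^{i\phi(\Gamma)}\bigl(2\Gamma-\frac{3i}{2\pi}\Gamma^{-2}\bigr)$, so the unit normal to $\Sigma_1$ may be identified with $iz'(\Gamma)/|z'(\Gamma)|$; since $a\cdot n=|z'(\Gamma)|^{-1}\im\bigl(\overline{z'(\Gamma)}\,a\bigr)$ for any complex vector $a$, condition \eqref{velmatch} is equivalent to $\im\bigl(\overline{z'(\Gamma)}\,(w(z(\Gamma))-\mu z(\Gamma))\bigr)=0$ for all $\Gamma>0$.

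The core computation then goes as follows. Factor $\overline{z(\Gamma)}-\overline{z(t)}=e^{-i\phi(\Gamma)}\bigl(\Gamma^{2}-t^{2}e^{i\psi}\bigr)$ with $\psi=\psi(\Gamma,t)=\frac{1}{2\pi}(\Gamma^{-3}-t^{-3})$. Rationalizing the denominator produces $|\Gamma^{2}-t^{2}e^{i\psi}|^{2}=\Gamma^{4}+t^{4}-2\Gamma^{2}t^{2}\cos\psi$, which — cosine being even — is exactly the denominator appearing in \eqref{calka}. Multiplying by $\overline{z'(\Gamma)}=e^{-i\phi(\Gamma)}\bigl(2\Gamma+\frac{3i}{2\pi}\Gamma^{-2}\bigr)$ cancels the unimodular factors $e^{\pm i\phi(\Gamma)}$; after collecting imaginary parts (and using that $\sin$ is odd to replace $\psi$ by $\frac{1}{2\pi}(t^{-3}-\Gamma^{-3})$) the $w$-term becomes precisely $\frac{1}{2\pi}$ times the integrand on the left-hand side of \eqref{calka}. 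The remaining term is elementary: $\overline{z'(\Gamma)}\cdot(-\mu z(\Gamma))=-\tfrac23\Gamma^{2}\bigl(2\Gamma+\tfrac{3i}{2\pi}\Gamma^{-2}\bigr)=-\tfrac43\Gamma^{3}-\tfrac{i}{\pi}$, whose imaginary part is $-\frac1\pi$. Hence $\im\bigl(\overline{z'(\Gamma)}(w(z(\Gamma))-\mu z(\Gamma))\bigr)=0$ is equivalent to $\frac{1}{2\pi}PV\int_0^\infty(\cdots)\,dt=\frac1\pi$, which is \eqref{calka}.

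The step I expect to require the most care — and the only one that is not routine — is the first reduction: verifying that the boundary value of $w$ along $\Sigma_1$ really is the principal-value integral above, and that its normal component is continuous across $\Sigma_1$, so that \eqref{velmatch} is the honest identity as stated rather than an ambiguous expression. This is the curved-spiral analogue of the flat Plemelj computation carried out in \eqref{harm}--\eqref{anharm}; I would obtain it from the construction of $v$ in \cite[Theorem 2.1 and Proposition 1.4]{COPS}, together with the facts noted above that $\Sigma_1$ is smooth and non-self-intersecting and that the singular integral defining $w(z(\Gamma))$ converges. Once this is in place nothing further stands in the way — consistently with the paper's remark that the genuinely open difficulty lies entirely in evaluating the integral in \eqref{calka}.
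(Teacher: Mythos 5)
Your proposal is correct and follows essentially the same route as the paper: parametrize $\Sigma_1$ by the circulation (which is exactly the paper's substitution $t=s^{1/2}$ applied to the density $g_1(s)=\tfrac12 s^{-1/2}$), compute $n\cdot\mu z=\tfrac1\pi$ from the unnormalized normal $i z'(\Gamma)$, and reduce $n\cdot w$ to the same principal-value integral by rationalizing $\overline{z(\Gamma)}-\overline{z(t)}$ and cancelling the unimodular factors. The only presentational difference is that you make explicit the Plemelj-type justification that the normal component of $w$ is continuous across the sheet and given by the principal value, a point the paper takes for granted from \cite{COPS}.
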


\begin{proof}
We recall that $w(z)=v(z,1)$ and in the case $\mu = \frac23$, $\Sigma_1$ is described by
$$
Z(\Gamma, 1) = \Gamma^{2} e^{\frac{i}{2\pi} \Gamma^{-3}}.
$$
Therefore the tangent vector to $\Sigma_1$ at $z = Z(\Gamma,1)$ is
$$
\tau = \frac{d}{d\Gamma} Z(\Gamma, 1) =  2 \Gamma e^{\frac{i}{2\pi} \Gamma^{-3}} - \frac{3i}{2\pi} \Gamma^{-2} e^{\frac{i}{2\pi} \Gamma^{-3}}.
$$
Thus
$$
n = i\tau = i 2 \Gamma e^{\frac{i}{2\pi} \Gamma^{-3}} + \frac{3}{2\pi} \Gamma^{-2} e^{\frac{i}{2\pi} \Gamma^{-3}}.
$$
Observe that
\begin{align*}
n \cdot \mu z  &= \frac{2}{3} \Re (n \overline{z}) = \frac{2}{3} \Re \left( \left( i 2 \Gamma e^{\frac{i}{2\pi} \Gamma^{-3}} + \frac{3}{2\pi} \Gamma^{-2} e^{\frac{i}{2\pi} \Gamma^{-3}} \right) \overline{\Gamma^{2} e^{\frac{i}{2\pi} \Gamma^{-3}}} \right) \\
&= \frac{2}{3} \Re \left( \left( i 2 \Gamma e^{\frac{i}{2\pi} \Gamma^{-3}} + \frac{3}{2\pi} \Gamma^{-2} e^{\frac{i}{2\pi} \Gamma^{-3}} \right) \Gamma^{2} e^{\frac{-i}{2\pi} \Gamma^{-3}} \right) = \frac{2}{3} \Re \left( i 2 \Gamma^3  + \frac{3}{2\pi} \right) = \frac{1}{\pi}.
\end{align*}
Hence \eqref{velmatch} if and only if $n \cdot w_{| \Sigma} = \frac{1}{\pi}$ and it is enough to compute $n \cdot w_{| \Sigma}$. We fix $z = Z(\Gamma,1) \in \Sigma_1$ and we get
\begin{align}
\nonumber n \cdot w &= \Re \left(n \overline{w(z)} \right) = \Re \left(n \overline{v(z,1)} \right) =  \frac{1}{2\pi} \Re \left( n \int_{\C} \frac{ -i (\overline{z}-\overline{y})}{|z-y|^2} \, d\omega_1 (y) \right) \\
\nonumber &= \frac{1}{2\pi} \Re \left( -in PV \int_0^\infty \frac{\overline{z} - s \exp \left( -\frac{i}{2\pi} s^{-3/2} \right)}{\left| z - s \exp \left( \frac{i}{2\pi} s^{-3/2} \right) \right|^2} \cdot \frac12 \cdot s^{-1/2} \, ds \right) \\
\nonumber &= -\frac{1}{4\pi} PV \int_0^\infty \Re \left( in \frac{\overline{z} - s \exp \left( -\frac{i}{2\pi} s^{-3/2} \right)}{\left| z - s \exp \left( \frac{i}{2\pi} s^{-3/2} \right) \right|^2} \right) s^{-1/2} \, ds \\
\nonumber &= - \frac{1}{4\pi} PV \int_0^\infty \frac{\Re \left( in \left( \overline{z} - s \exp \left( -\frac{i}{2\pi} s^{-3/2} \right) \right) \right) }{\left| z - s \exp \left( \frac{i}{2\pi} s^{-3/2} \right) \right|^2} s^{-1/2} \, ds = [t = s^{1/2}, \ 2dt = s^{-1/2} ds] \\
\label{nw} &= - \frac{1}{2\pi} PV \int_0^\infty \frac{\Re \left( in \left( \overline{z} - t^2 \exp \left( -\frac{i}{2\pi} t^{-3} \right) \right) \right) }{\left| z - t^2 \exp \left( \frac{i}{2\pi} t^{-3} \right) \right|^2} \, dt.
\end{align}
Note that
\begin{align}
\nonumber \left| z - t^2 \exp \left( \frac{i}{2\pi} t^{-3} \right) \right|^2 &= |z|^2 + \left| t^2 \exp \left( \frac{i}{2\pi} t^{-3} \right) \right|^2 - 2 \Re \left( \overline{z} t^2 \exp \left( \frac{i}{2\pi} t^{-3} \right) \right) \\
\nonumber &= \Gamma^4 + t^4 - 2 \Re \left( \Gamma^2 t^2 \exp \left( \frac{i}{2\pi} \left( t^{-3} - \Gamma^{-3} \right) \right) \right) \\
\label{norm} &= \Gamma^4 + t^4 - 2 \Gamma^2 t^2 \cos \left( \frac{1}{2\pi} \left( t^{-3} - \Gamma^{-3} \right) \right).
\end{align}
Now we will compute the numerator
\begin{align*}
&\quad \Re \left( in \left( \overline{z} - t^2 \exp \left( -\frac{i}{2\pi} t^{-3} \right) \right) \right) = \Re \left( in \left( \Gamma^{2} e^{-\frac{i}{2\pi} \Gamma^{-3}} - t^2 e^{ -\frac{i}{2\pi} t^{-3} } \right) \right)  \\
&= \Re \left( i \left(i 2 \Gamma e^{\frac{i}{2\pi} \Gamma^{-3}} + \frac{3}{2\pi} \Gamma^{-2} e^{\frac{i}{2\pi} \Gamma^{-3}}\right) \left( \Gamma^{2} e^{-\frac{i}{2\pi} \Gamma^{-3}} - t^2 e^{ -\frac{i}{2\pi} t^{-3} } \right) \right) \\
&= \Re \left( \left(- 2 \Gamma e^{\frac{i}{2\pi} \Gamma^{-3}} + \frac{3i}{2\pi} \Gamma^{-2} e^{\frac{i}{2\pi} \Gamma^{-3}}\right) \left( \Gamma^{2} e^{-\frac{i}{2\pi} \Gamma^{-3}} - t^2 e^{ -\frac{i}{2\pi} t^{-3} } \right) \right) \\
&= \Re \left( \left(- 2 \Gamma + \frac{3i}{2\pi} \Gamma^{-2} \right) \left( \Gamma^{2} - t^2 e^{ -\frac{i}{2\pi} \left( t^{-3} - \Gamma^{-3} \right) } \right) \right) \\
&= \Re \left( -2\Gamma^3 + \frac{3i}{2\pi} + 2 \Gamma t^2 e^{ -\frac{i}{2\pi} \left( t^{-3} - \Gamma^{-3} \right) } - \frac{3i}{2\pi} \Gamma^{-2}t^2 e^{ -\frac{i}{2\pi} \left( t^{-3} - \Gamma^{-3} \right) } \right) \\
&= -2\Gamma^3 + 2 \Gamma t^2 \cos \left( \frac{1}{2\pi} \left( t^{-3}-\Gamma^{-3} \right) \right) - \frac{3}{2\pi} \Gamma^{-2}t^2 \sin \left( \frac{1}{2\pi} \left( t^{-3}-\Gamma^{-3} \right) \right).
\end{align*}
Combining it with \eqref{nw} and \eqref{norm} we arrive at
\begin{align*}
\nonumber n \cdot w &= - \frac{1}{2\pi} PV \int_0^\infty \frac{-2\Gamma^3 + 2 \Gamma t^2 \cos \left( \frac{1}{2\pi} \left( t^{-3}-\Gamma^{-3} \right) \right) - \frac{3}{2\pi} \Gamma^{-2}t^2 \sin \left( \frac{1}{2\pi} \left( t^{-3}-\Gamma^{-3} \right) \right)}{\Gamma^4 + t^4 - 2 \Gamma^2 t^2 \cos \left( \frac{1}{2\pi} \left( t^{-3} - \Gamma^{-3} \right) \right)} \, dt \\
&= \frac{1}{2\pi} PV \int_0^\infty \frac{2\Gamma^3 - 2 \Gamma t^2 \cos \left( \frac{1}{2\pi} \left( t^{-3}-\Gamma^{-3} \right) \right) + \frac{3}{2\pi} \Gamma^{-2}t^2 \sin \left( \frac{1}{2\pi} \left( t^{-3}-\Gamma^{-3} \right) \right)}{\Gamma^4 + t^4 - 2 \Gamma^2 t^2 \cos \left( \frac{1}{2\pi} \left( t^{-3} - \Gamma^{-3} \right) \right)} \, dt.
\end{align*}
\end{proof}

Unfortunatelly, numerical experiments show that the equality \eqref{calka} may not hold. Indeed, consider $\Gamma = 1$ and the following integral 
\begin{align*}
& \frac{1}{2\pi} PV 
\int_0^\infty \frac{2 - 2  t^2
 \cos \big( \frac{1}{2\pi} \big( t^{-3}\!-\!1 \big) \big) 
\!+\! \frac{3}{2\pi}\, t^2
 \sin \big( \frac{1}{2\pi} \big( t^{-3}\!-\!1 \big) \big)}
{1 + t^4 - 2  t^2 
\cos \big( \frac{1}{2\pi} \big( t^{-3}\! -\! 1 \big) \big)} \, dt \\
&\hskip7cm  = \frac{1}{2\pi} \lim_{\varepsilon \to 0^+} \Bigg( \int_0^{1-\varepsilon} + \int_{1+\varepsilon}^{\infty} \Bigg).
\end{align*}
To estimate numerically this integral, we can use \texttt{mpmath} module in Python and compute an approximated value for $\varepsilon \in \{ 10^{-1}, \ldots, 10^{-9} \}$. 

\begin{lstlisting}[caption={Python code for numerical integration}, label={lst:python}]
import mpmath as mp

mp.mp.dps = 100 
pi = mp.pi

def integrand(t):
    theta = (t**(-3) - 1) / (2*pi)
    num = 2 - 2*t**2 * mp.cos(theta) + (3/(2*pi))*t**2 * mp.sin(theta)
    den = 1 + t**4 - 2*t**2 * mp.cos(theta)
    return num/den

def principal_value_integral(eps):
    I1 = mp.quad(integrand, [0, 1 - eps])
    I2 = mp.quad(integrand, [1 + eps, mp.inf])
    return I1 + I2

for eps in [1e-1, 1e-2, 1e-3, 1e-4, 1e-5, 1e-6, 1e-7, 1e-8, 1e-9]:
    I = principal_value_integral(eps)
    result = I/(2*pi)
    print(f"eps = {eps:1.0e}, I/(2pi) = {float(result):.10f}")
\end{lstlisting}

It gives the following result.

\begin{lstlisting}[caption={Result of the code in Listing \ref{lst:python}}, label={lst:result}]
eps = 1e-01, I/(2pi) = -0.0563264347
eps = 1e-02, I/(2pi) = -0.0443320238
eps = 1e-03, I/(2pi) = -0.0431515017
eps = 1e-04, I/(2pi) = -0.0430283775
eps = 1e-05, I/(2pi) = -0.0430172758
eps = 1e-06, I/(2pi) = -0.0430159708
eps = 1e-07, I/(2pi) = -0.0430158447
eps = 1e-08, I/(2pi) = -0.0430158340
eps = 1e-09, I/(2pi) = -0.0430158134
\end{lstlisting}

Although we are unable to present a rigorous proof of this fact, the numerical experiment shows that the value is negative and stabilizes around $-0.0430158$, hence it cannot be equal to $\frac{1}{\pi}$.

\noindent
\textbf{Acknowledgement.} T. Cie\'slak and J. Siemianowski were supported by the SONATA BIS 7 grant UMO-2017/26/E/ST1/00989 from the National Science Centre, Poland.

\end{document}